 \newtheorem{thm}{Theorem}[section]
 \newtheorem{cor}[thm]{Corollary}
 \newtheorem{prop}[thm]{Proposition}
 \theoremstyle{definition}
 \newtheorem{defn}[thm]{Definition}
 \newtheorem{ex}[thm]{Example}
 \theoremstyle{remark}
 \newtheorem{rem}[thm]{Remark}
 \numberwithin{equation}{section}
\begin{document}
%
%
%
%
%
%
%
%
%

\title[Weaving Hilbert space frames and duality]
 {Weaving Hilbert Space Frames and Duality}

\author[F. Arabyani Neyshaburi]{Fahimeh Arabyani Neyshaburi}
\address{Department of Mathematics and Computer Sciences, Hakim Sabzevari University, Sabzevar, Iran.}
\email{fahimeh.arabyani@gmail.com}

\author[A. Arefijamaal]{Ali Akbar Arefijamaal}
\address{Department of Mathematics and Computer Sciences, Hakim Sabzevari University, Sabzevar, Iran.}
\email{arefijamaal@hsu.ac.ir}

\subjclass{42C15
}
\keywords{ Riesz bases, Hilbert space frames, dual frames, woven frames.}

\begin{abstract}
Weaving Hilbert space frames have been introduced recently by Bemrose et al. to deal with some problems in  distributed  signal processing.
In this paper, we    survey this topic from the viewpoint of the duality principle, so we obtain  new properties  in weaving frame theory related to dual frames. Specifically, we give some sufficient  conditions  under which  a frame with its canonical dual, alternate duals or approximate duals constitute some concrete  pairs of  woven frames. Moreover, we survey the  stability of weaving frames under  perturbations and different operators.
\end{abstract}

\maketitle

\smallskip

\section{Introduction}
Today, frame theory   have been developed   in various areas of pure and applied science  and engineering \cite{Ar13,Ben06, Bod05, Bol98, Can04}.  Due to redundance  requirement in applications and theoretical goals, in over the past two decades, several generalizations of
frames have been presented \cite{Ali93, arefi3, arefi1,  Cas04, gav07,  Kaftal, Sun06}. Recently,   Bemrose et al.  \cite{Bemros}  have introduced a new concept in frame  theory  due to some   problems in distributed signal processing,  called \textit{weaving Hilbert space frames}. This notion  also  has potential applications in wireless  sensor networks which require distributed processing using different frames.
 Two given frames $\{\varphi_{i}\}_{i\in I}$ and $\{\psi_{i}\}_{i\in I}$ for a separable  Hilbert space  $\mathcal{H}$ are woven if there exist constants $0 <A \leq B< \infty$ so that for every subset $J\subset I$ the family $\{\varphi_{i}\}_{i\in J}\cup \{\psi_{i}\}_{i\in J^{c}}$  is a frame for $\mathcal{H}$ with frame bounds $A$ and $B$.  In \cite{Bemros, lynch} the authors presented some remarkable properties of weaving frames  and weaving Riesz bases. In this paper, we  give new basic properties of  weaving frames related to dual frames to  survey under which conditions a frame with its dual  constitute   woven frames.
Moreover, we  present some approaches for constructing  concrete pairs of  woven frames.

The paper is organized as follows. In Section 2, we will review some basic definitions and results related to frames and weaving frames in Hilbert spaces. Section 3  is devoted to presenting several methods of  constructing  concrete pairs of  woven frames. We also obtain some   conditions to prove that  a frame with its duals or approximate duals constitute some pairs of  woven frames. As a consequence, we show that every Riesz basis is woven with its  canonical dual.
Finally, we survey the stability of woven frames under small perturbations in Section 4.

\section{Review and preliminaries}
A family of vectors $\varphi:=\{\varphi_{i}\}_{i\in I}$ in a separable Hilbert space   $\mathcal{H}$ is called a Riesz basis if $\overline{span}\{\varphi_{i}\}_{i\in I} = \mathcal{H}$ and   there exist the constants
 $0<A_{\varphi}\leq B_{\varphi}<\infty$ so that for all $\{c_{i}\}_{i\in I}\in l^{2}(I)$,
 \begin{eqnarray*}
A_{\varphi}\sum_{i\in I}\vert c_{i}\vert^{2} \leq \Vert \sum_{i\in I} c_{i}\varphi_{i}\Vert^{2} \leq B_{\varphi} \sum_{i\in I}\vert c_{i}\vert^{2}.
\end{eqnarray*}
The constants $A_{\varphi}$ and $B_{\varphi}$ are called \textit{Riesz basis bounds}. Even though, Riesz bases are useful tools in some applications, they present  a unique decomposition of signals. Redundancy is one of the striking properties of frame, which helps us    obtain infinitely many  decompositions of a signal and
 reduce the errors of reconstruction  formulas.
\begin{defn}A family of vectors
$\varphi:=\{\varphi_{i}\}_{i\in I}$ in   $\mathcal{H}$ is called a  \textit{frame} for
$\mathcal{H}$ if there exist the constants
 $0<A_{\varphi}\leq B_{\varphi}<\infty$ such that
\begin{eqnarray}\label{Def frame}
A_{\varphi}\|f\|^{2}\leq \sum_{i\in I}\vert f,\varphi_{i}\rangle\vert^{2}\leq
B_{\varphi}\|f\|^{2},\qquad (f\in \mathcal{H}).
\end{eqnarray}
\end{defn}
The constants $A_{\varphi}$ and $B_{\varphi}$ are called \textit{frame bounds}. The supremum of all lower frame bounds is called the \textit{optimal lower frame bound} and likewise, the \textit{optimal upper frame bound} is defined as the infimum of all upper frame bounds.  $\{\varphi_{i}\}_{i\in I}$ is said to be  a \textit{Bessel sequence} whenever in  (\ref{Def frame}), the right-hand side holds.
 A frame
 $\{\varphi_{i}\}_{i\in I}$ is called   $A$-\textit{tight frame} if $A=A_{\varphi}=B_{\varphi}$, and in the case of $A_{\varphi}=B_{\varphi}=1$ it is called a  \textit{Parseval frame}. Moreover,  $\varphi$ is an  \textit{$\epsilon$-nearly Parseval frame} \cite{BC}  if
\begin{eqnarray*}
(1-\epsilon)\|f\|^{2}\leq \sum_{i\in I}|\langle f,\varphi_{i}\rangle|^{2}\leq
(1+\epsilon)\|f\|^{2},\qquad (f\in \mathcal{H}).
\end{eqnarray*}
 Also, $\varphi$  is called  \textit{equal-norm} if  there exists $c>0$ so that  $\Vert \varphi_{i}\Vert=c$ for all $i\in I$ and it is   \textit{$\epsilon$-nearly equal norm} if
\begin{eqnarray*}
(1-\epsilon)c\leq \Vert \varphi_{i}\Vert\leq
(1+\epsilon)c.
\end{eqnarray*}

Given a
frame $\varphi=\{\varphi_{i}\}_{i\in I}$, the \textit{frame operator} is defined by
\begin{eqnarray*}
S_{\varphi}f=\sum_{i\in I}\langle f,\varphi_{i}\rangle \varphi_i.
\end{eqnarray*}
 It is a bounded, invertible, and self-adjoint
operator \cite{Chr08}.
Also, the \textit{synthesis operator} $T_{\varphi}: l^{2}\rightarrow \mathcal{H}$ is defined  by $T_{\varphi}\lbrace c_{i}\rbrace= \sum_{i\in I} c_{i}\varphi_{i}$. The frame operator can be writen as $S_{\varphi}= T_{\varphi}T_{\varphi}^{*}$ where $T_{\varphi}^{*}: \mathcal{H}\rightarrow l^{2}$, the adjoint of $T$, given by $T_{\varphi}^{*}\varphi= \lbrace \langle \varphi,\varphi_{i}\rangle\rbrace_{i\in I}$  is called the \textit{analysis operator}.
The family $\{S_{\varphi}^{-1}\varphi_{i}\}_{i\in I}$ is also a frame for $\mathcal{H}$,   called the \textit{canonical dual} frame. In general, a frame $\{\psi_i\}_{i\in I}\subseteq\mathcal{H}$ is called an \textit{alternate dual} or simply a \textit{dual}
 for   $\{\varphi_{i}\}_{i\in I}$ if
\begin{eqnarray}\label{Def Dual}
f=\sum_{i\in I}\langle f,\psi_{i}\rangle \varphi_i ,\qquad (f\in
\mathcal{H}).
\end{eqnarray}
All frames have at least a dual, the canonical dual, and redundant frames have infinite many  alternate  dual frames.
It is  known that  every dual frame is of the form  $\{S_{\varphi}^{-1}\varphi_{i}+u_{i}\}_{i\in I}$, where   $\{u_{i}\}_{i\in I}$ is a Bessel sequence that  satisfies
\begin{eqnarray}\label{dual1}
\sum_{i\in I}\langle f,u_{i}\rangle \varphi_{i}=0 ,\qquad (f\in
\mathcal{H}).
\end{eqnarray}

\smallskip

Now, let us to  present  some preliminaries about  weaving frames which are used in our main results.

\begin{defn}
A finite family of frames $\{\phi_{ij}\}_{j=1,i\in I}^{M}$ in Hilbert space $\mathcal{H}$ is said  to be \textit{woven} if there are universal constants $A$ and $B$ so that for every partition $\{J_{j}\}_{j=1}^{M}$ of $I$, the family   $\{\phi_{ij}\}_{j=1,i\in J_{j}}^{M}$ is a frame for $\mathcal{H}$ with bounds $A$ and $B$, respectively. Each family $\{\phi_{ij}\}_{j=1,i\in J_{j}}^{M}$ is called a \textit{weaving}.
\end{defn}
Moreover the family $\{\phi_{ij}\}_{j=1,i\in I}^{M}$ is called \textit{weakly woven} if for every partition  $\{J_{j}\}_{j=1}^{M}$ of $I$, the family $\{\phi_{ij}\}_{j=1,i\in J_{j}}^{M}$ is a frame for $\mathcal{H}$.
The following theorem shows that weakly woven is equivalent to the frames being woven.
\begin{thm}\label{2.0}\cite{Bemros}
Given two frames  $\{\varphi_{i}\}_{i\in I}$ and $\{\psi_{i}\}_{i\in I}$ for $\mathcal{H}$, the following are equivalent:
\begin{itemize}
\item[(i)] The two frames are woven.
\item[(ii)] The two frames are weakly woven.
\end{itemize}
 \end{thm}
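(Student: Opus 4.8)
The plan is to show the non-trivial implication $(ii)\Rightarrow(i)$, since $(i)\Rightarrow(ii)$ is immediate from the definitions. So assume $\{\varphi_i\}_{i\in I}$ and $\{\psi_i\}_{i\in I}$ are weakly woven; we must produce \emph{universal} bounds $A,B$ valid for every $J\subseteq I$. The upper bound is the easy half: for any $J$ and any $f\in\mathcal H$,
\begin{eqnarray*}
\sum_{i\in J}|\langle f,\varphi_i\rangle|^2+\sum_{i\in J^c}|\langle f,\psi_i\rangle|^2
\leq \sum_{i\in I}|\langle f,\varphi_i\rangle|^2+\sum_{i\in I}|\langle f,\psi_i\rangle|^2
\leq (B_\varphi+B_\psi)\|f\|^2,
\end{eqnarray*}
so $B:=B_\varphi+B_\psi$ works uniformly. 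All the difficulty is in the uniform lower bound.

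For the lower bound I would argue by contradiction. Suppose there is no universal $A>0$; then for each $n\in\mathbb N$ there is a subset $J_n\subseteq I$ and a unit vector $f_n\in\mathcal H$ with
\begin{eqnarray*}
\sum_{i\in J_n}|\langle f_n,\varphi_i\rangle|^2+\sum_{i\in J_n^c}|\langle f_n,\psi_i\rangle|^2 < \frac1n.
\end{eqnarray*}
The goal is to extract from the data $(J_n,f_n)$ a single partition whose weaving fails to be a frame, contradicting weak wovenness. The natural device is a diagonal/compactness argument: since $I$ is countable, by passing to a subsequence we may assume that for every fixed $i\in I$ the membership statement ``$i\in J_n$'' is eventually constant, so the sets $J_n$ converge pointwise to a limit set $J_\infty$ (i.e.\ $\chi_{J_n}\to\chi_{J_\infty}$ pointwise on $I$). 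One then wants to conclude that $\{\varphi_i\}_{i\in J_\infty}\cup\{\psi_i\}_{i\in J_\infty^c}$ is not a frame, which contradicts $(ii)$.

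The main obstacle is precisely making this limiting step rigorous: pointwise convergence of $\chi_{J_n}$ does not by itself transfer the ``almost-zero'' lower frame sums to the limit partition, because the vectors $f_n$ need not converge and the bad behaviour could escape to infinity in the index set. The fix I anticipate is a two-layer truncation: first use the Bessel bounds $B_\varphi,B_\psi$ to choose, uniformly in $n$, a finite set $F\subseteq I$ carrying all but $\varepsilon$ of the energy of $\{\varphi_i\}$ and $\{\psi_i\}$ against vectors in a suitable finite-dimensional reduction; then on the finite set $F$ the sets $J_n\cap F$ take only finitely many values, so some value is attained infinitely often, and on that fixed finite pattern one transfers the estimate. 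Combined with a normalization/compactness argument for $\{f_n\}$ restricted to the relevant finite-dimensional subspace, this should force the existence of a genuine partition $\{J_\infty,J_\infty^c\}$ and a unit vector $f$ with arbitrarily small lower frame sum, i.e.\ the weaving is not a frame — contradicting $(ii)$. Hence a universal $A>0$ exists and the two frames are woven.
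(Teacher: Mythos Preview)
The paper under review does not supply a proof of this theorem; it is quoted from \cite{Bemros} as a known result, so there is no in-paper argument to compare your attempt against.

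As for the proposal itself: the outline has the right shape, but there is a genuine gap at the limiting step, which you already flag. Your proposed fix---choosing a finite $F\subseteq I$ carrying all but $\varepsilon$ of the frame energy ``uniformly in $n$''---cannot work as written in infinite dimensions. For a frame $\{\varphi_i\}_{i\in I}$ on an infinite-dimensional $\mathcal H$ there is no finite $F$ with $\sum_{i\notin F}|\langle f,\varphi_i\rangle|^2<\varepsilon$ for \emph{all} unit vectors $f$; such an $F$ would force $\{\varphi_i\}_{i\in F}$ to be a frame for all of $\mathcal H$, impossible when $\dim\mathcal H=\infty$. Since the unit vectors $f_n$ need not have any norm-convergent subsequence, the ``bad'' mass can escape simultaneously in the index set and in the Hilbert space, and a one-shot diagonal argument on the sets $J_n$ alone does not produce a single partition $J_\infty$ whose weaving fails to be a frame.

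The actual proof in \cite{Bemros} is substantially more delicate than a compactness argument: it builds the limit partition through an inductive construction, at each stage freezing the partition on a growing finite index set while simultaneously producing unit vectors with controlled (small) weaving sums relative to the partially frozen partition, and then shows that the resulting limit partition cannot give a frame. That iterative construction---not a finite-dimensional reduction---is precisely the missing ingredient in your sketch.
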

  In the following we mention some properties, proved in \cite{Bemros}, which are used in the present paper.
\begin{prop}\label{2.4..}
If $\{\phi_{ij}\}_{j=1,i\in I}^{M}$ is a family of Bessel sequences for $\mathcal{H}$ with a Bessel bound $B_{j}$ for all $1\leq j \leq M$. Then every weaving is a Bessel sequence with a Bessel bound $\sum_{j=1}^{M}B_{j}$.
 \end{prop}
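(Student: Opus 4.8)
The plan is to verify the Bessel bound for an arbitrary weaving by a direct estimate, partitioning the index set according to the partition that defines the weaving and comparing each piece to the corresponding frame. Fix a partition $\{J_{j}\}_{j=1}^{M}$ of $I$ and set $\Phi := \{\phi_{ij}\}_{j=1,i\in J_{j}}^{M}$. For any $f\in\mathcal{H}$ I would write the frame-type sum for $\Phi$ as a sum over $j$ of the partial sums $\sum_{i\in J_{j}}|\langle f,\phi_{ij}\rangle|^{2}$.

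The key observation is that for each fixed $j$, since $J_{j}\subseteq I$, the partial sum over $i\in J_{j}$ is dominated by the full sum over $i\in I$:
\begin{eqnarray*}
\sum_{i\in J_{j}}|\langle f,\phi_{ij}\rangle|^{2}\leq \sum_{i\in I}|\langle f,\phi_{ij}\rangle|^{2}\leq B_{j}\|f\|^{2},
\end{eqnarray*}
where the last inequality uses that $\{\phi_{ij}\}_{i\in I}$ is a Bessel sequence with bound $B_{j}$. Summing this over $j=1,\dots,M$ gives
\begin{eqnarray*}
\sum_{j=1}^{M}\sum_{i\in J_{j}}|\langle f,\phi_{ij}\rangle|^{2}\leq \Big(\sum_{j=1}^{M}B_{j}\Big)\|f\|^{2},
\end{eqnarray*}
which is exactly the assertion that $\Phi$ is a Bessel sequence with bound $\sum_{j=1}^{M}B_{j}$. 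Since the partition was arbitrary, the bound is uniform over all weavings.

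There is essentially no obstacle here: the argument is a one-line domination of a subsum by a full sum, followed by finite additivity over the $M$ pieces; no interaction between the different frames occurs on the upper-bound side (unlike the lower bound, which is the genuinely hard direction in weaving theory and is not claimed in this proposition). The only point worth a word of care is the convergence of the various series, which is guaranteed termwise by the Bessel hypothesis on each $\{\phi_{ij}\}_{i\in I}$, so rearranging the finite double sum is legitimate. I would therefore present the proof in the three displayed steps above essentially verbatim.
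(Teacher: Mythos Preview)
Your argument is correct and is exactly the natural one: bound each partial sum $\sum_{i\in J_{j}}|\langle f,\phi_{ij}\rangle|^{2}$ by the full Bessel sum $\sum_{i\in I}|\langle f,\phi_{ij}\rangle|^{2}\le B_{j}\|f\|^{2}$ and add over $j$. Note that the present paper does not supply its own proof of this proposition; it is quoted from \cite{Bemros} as background, and your derivation is the standard one-line estimate that appears there.
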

\begin{prop}\label{prop1.4}
Let $\varphi=\{\varphi_{i}\}_{i\in I}$ be a frame and $T$   an invertible operator satisfying
$\Vert I_{\mathcal{H}} - T\Vert^{2}  < \dfrac{A_{\varphi}}{B_{\varphi}}$.
Then $\varphi$ and $T\varphi$ are woven   with the  universal  lower bound $\left(\sqrt{A_{\varphi}}-\sqrt{B_{\varphi}}\Vert I_{\mathcal{H}}-T^{*}\Vert\right)^{2}$.
 \end{prop}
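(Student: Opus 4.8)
The plan is to fix an arbitrary subset $J\subseteq I$ and verify that the weaving $\{\varphi_{i}\}_{i\in J}\cup\{T\varphi_{i}\}_{i\in J^{c}}$ is a frame for $\mathcal{H}$ with bounds that do not depend on $J$; invoking Theorem~\ref{2.0} (or simply reading off the uniform constants) then shows that $\varphi$ and $T\varphi$ are woven. The upper bound is the easy half: $\varphi$ is Bessel with bound $B_{\varphi}$ and $T\varphi=\{T\varphi_{i}\}_{i\in I}$ is Bessel with bound $\Vert T\Vert^{2}B_{\varphi}$, so Proposition~\ref{2.4..} furnishes the universal Bessel bound $(1+\Vert T\Vert^{2})B_{\varphi}$ for every weaving.

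For the lower bound I would compare the weaving analysis coefficients of a vector $f\in\mathcal{H}$ with its analysis coefficients $\{\langle f,\varphi_{i}\rangle\}_{i\in I}$ relative to the full frame $\varphi$. Since $\langle f,T\varphi_{i}\rangle=\langle T^{*}f,\varphi_{i}\rangle$ and $T^{*}f=f-(I_{\mathcal{H}}-T^{*})f$, for each $i\in J^{c}$ we have $\langle f,T\varphi_{i}\rangle=\langle f,\varphi_{i}\rangle-\langle(I_{\mathcal{H}}-T^{*})f,\varphi_{i}\rangle$. Thus the sequence $a=(a_{i})_{i\in I}$ with $a_{i}=\langle f,\varphi_{i}\rangle$ for $i\in J$ and $a_{i}=\langle f,T\varphi_{i}\rangle$ for $i\in J^{c}$ agrees with $b=(\langle f,\varphi_{i}\rangle)_{i\in I}$ off $J^{c}$, while on $J^{c}$ one has $a_{i}-b_{i}=-\langle(I_{\mathcal{H}}-T^{*})f,\varphi_{i}\rangle$.

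Now I would apply the triangle inequality in $l^{2}(I)$: $\Vert a\Vert_{l^{2}}\geq\Vert b\Vert_{l^{2}}-\Vert a-b\Vert_{l^{2}}$. The first term is controlled from below by the lower frame inequality for $\varphi$, namely $\Vert b\Vert_{l^{2}}^{2}=\sum_{i\in I}|\langle f,\varphi_{i}\rangle|^{2}\geq A_{\varphi}\Vert f\Vert^{2}$. The second is controlled from above by enlarging the sum over $J^{c}$ to a sum over all of $I$ and then using the Bessel bound of $\varphi$ on the vector $(I_{\mathcal{H}}-T^{*})f$: $\Vert a-b\Vert_{l^{2}}^{2}=\sum_{i\in J^{c}}|\langle(I_{\mathcal{H}}-T^{*})f,\varphi_{i}\rangle|^{2}\leq\sum_{i\in I}|\langle(I_{\mathcal{H}}-T^{*})f,\varphi_{i}\rangle|^{2}\leq B_{\varphi}\Vert I_{\mathcal{H}}-T^{*}\Vert^{2}\Vert f\Vert^{2}$. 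Since $\Vert a\Vert_{l^{2}}^{2}=\sum_{i\in J}|\langle f,\varphi_{i}\rangle|^{2}+\sum_{i\in J^{c}}|\langle f,T\varphi_{i}\rangle|^{2}$, combining the two estimates gives $\big(\sum_{i\in J}|\langle f,\varphi_{i}\rangle|^{2}+\sum_{i\in J^{c}}|\langle f,T\varphi_{i}\rangle|^{2}\big)^{1/2}\geq\big(\sqrt{A_{\varphi}}-\sqrt{B_{\varphi}}\Vert I_{\mathcal{H}}-T^{*}\Vert\big)\Vert f\Vert$. As $\Vert I_{\mathcal{H}}-T^{*}\Vert=\Vert I_{\mathcal{H}}-T\Vert$, the hypothesis $\Vert I_{\mathcal{H}}-T\Vert^{2}<A_{\varphi}/B_{\varphi}$ makes this constant strictly positive, and squaring yields exactly the universal lower bound $\big(\sqrt{A_{\varphi}}-\sqrt{B_{\varphi}}\Vert I_{\mathcal{H}}-T^{*}\Vert\big)^{2}$ claimed in the statement.

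The calculation is short, so the only point needing care is bookkeeping: the perturbation term $\langle(I_{\mathcal{H}}-T^{*})f,\varphi_{i}\rangle$ is summed over $J^{c}$ only, and it is essential to bound its partial $l^{2}$-sum by the \emph{full} sum over $I$ so that the Bessel bound of $\varphi$ applies with a constant independent of the partition — this uniformity in $J$ is precisely what upgrades ``a frame for each fixed partition'' into ``woven''. The remaining ingredient is the elementary identity $\Vert T-I_{\mathcal{H}}\Vert=\Vert T^{*}-I_{\mathcal{H}}\Vert$, which is what reconciles the hypothesis (phrased in terms of $T$) with the bound (phrased in terms of $T^{*}$).
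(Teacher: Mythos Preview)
The paper does not supply its own proof of this proposition; it is quoted from \cite{Bemros} as a preliminary result. Your argument is correct and is essentially the standard one: write the weaving analysis sequence as the full $\varphi$-analysis sequence minus a perturbation supported on $J^{c}$, bound that perturbation via the Bessel bound of $\varphi$ applied to $(I_{\mathcal{H}}-T^{*})f$, and conclude by the reverse triangle inequality in $l^{2}(I)$. This is exactly the template the present paper uses later in the proof of Theorem~\ref{perturbation1} (which generalises Proposition~\ref{prop1.4}), so your approach is in line with both the cited source and the paper's own style of argument.
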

\begin{defn}
If $W_{1}$ and $W_{2}$ are subspaces of $\mathcal{H}$, let
\begin{eqnarray*}
d_{W_{1}}(W_{2})=\inf\{\Vert f-g\Vert : \quad f\in W_{1}, g\in \mathcal{S}_{W_{2}}\},
\end{eqnarray*}
and
\begin{eqnarray*}
d_{W_{2}}(W_{1})=\inf\{\Vert f-g\Vert : \quad f\in \mathcal{S}_{W_{1}}, g\in  W_{2} \},
\end{eqnarray*}
where $\mathcal{S}_{W_{i}}=\mathcal{S}_{\mathcal{H}}\cap W_{i}$ and $\mathcal{S}_{\mathcal{H}}$ is the unit sphere in $\mathcal{H}$. Then the distance between $W_{1}$ and $W_{2}$  is defined as
\begin{eqnarray*}
d(W_{1}, W_{2})= \min\{d_{W_{1}}(W_{2}), d_{W_{2}}(W_{1})\}
\end{eqnarray*}
\end{defn}
\begin{thm}\label{distance}
If $\varphi=\{\varphi_{i}\}_{i\in I}$ and $\psi=\{\psi_{i}\}_{i\in I}$ are two Riesz bases for $\mathcal{H}$, then the following are equivalent
\begin{itemize}
\item[(i)]
$\varphi$ and $\psi$ are woven.
\item[(ii)]
For every  $J\subset I$, $d(\overline{span}\{\varphi_{i}\}_{i\in J}, \overline{span}\{\psi_{i}\}_{i\in J^{c}})>0$.
\item[(iii)]There is a constant $c>0$ so that for every $J\subset I$,
\begin{eqnarray*}
d_{\varphi_{J},\psi_{J^{c}}} :=d(\overline{span}\{\varphi_{i}\}_{i\in J}, \overline{span}\{\psi_{i}\}_{i\in J^{c}})\geq c.
\end{eqnarray*}
\end{itemize}
\end{thm}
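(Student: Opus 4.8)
The plan is to establish the cycle $(iii)\Rightarrow(ii)\Rightarrow(i)\Rightarrow(iii)$, leaning on Theorem \ref{2.0} to convert ``every weaving is a frame'' into ``woven''. Fix $J\subseteq I$ and set $W_1=\overline{span}\{\varphi_i\}_{i\in J}$, $W_2=\overline{span}\{\psi_i\}_{i\in J^c}$, with orthogonal projections $P_1,P_2$. I would first record three reductions. (a) Each $\{\varphi_i\}_{i\in J}$ is a Riesz basis, hence a frame, for $W_1$ with bounds $A_\varphi,B_\varphi$, so $A_\varphi\|P_1f\|^2\le\sum_{i\in J}|\langle f,\varphi_i\rangle|^2\le B_\varphi\|P_1f\|^2$ for all $f$, and similarly for $\psi$ and $W_2$; thus the weaving is Bessel (Proposition \ref{2.4..}) and its lower frame bound is precisely the estimate $P_1+P_2\ge\kappa I$ for some $\kappa>0$. (b) The synthesis operator of the weaving has range exactly $W_1+W_2$ (the summands being the ranges of the synthesis operators of the two Riesz bases $\{\varphi_i\}_{i\in J}$ and $\{\psi_i\}_{i\in J^c}$), so the weaving is a frame for $\mathcal H$ if and only if $W_1+W_2=\mathcal H$. (c) $d(W_1,W_2)=\sqrt{1-\|P_1P_2\|^2}$, so $d(W_1,W_2)>0$ is equivalent to $W_1\cap W_2=\{0\}$ together with $W_1+W_2$ closed.

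$(iii)\Rightarrow(ii)$ is trivial. For $(i)\Rightarrow(iii)$, let $A$ be a universal lower frame bound. Adding the inequalities in (a) gives $A\|f\|^2\le B_\varphi\|P_1f\|^2+B_\psi\|P_2f\|^2\le\max\{B_\varphi,B_\psi\}\,\langle(P_1+P_2)f,f\rangle$, hence $P_1+P_2\ge\delta I$ with $\delta=A/\max\{B_\varphi,B_\psi\}$, uniformly in $J$. Using $W_1\cap W_2=\{0\}$ for every $J$ (from the structural claim below), the spectral description of the pair $P_1,P_2$ forces $\|P_1P_2\|\le 1-\delta$, and by (c) this yields $d(W_1,W_2)\ge\sqrt{2\delta-\delta^2}$, a bound independent of $J$; that is (iii).

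$(ii)\Rightarrow(i)$. By Theorem \ref{2.0} it suffices that every weaving be a frame, i.e. by (b) that $W_1+W_2=\mathcal H$ for each $J$; this is the \emph{structural claim}, and together with $d(W_1,W_2)>0$ it actually says $\mathcal H=W_1\oplus W_2$ for every $J$ (every weaving is a Riesz basis). I would argue by contradiction. If $W_1+W_2\ne\mathcal H$ for some $J_0$, pick $0\ne h$ orthogonal to $W_1$ and $W_2$ and pass to $J=\{i:\langle h,\varphi_i\rangle=0\}\supseteq J_0$, for which $h$ still annihilates the whole weaving. Using the dual Riesz basis in the form $\overline{span}\{\widetilde\varphi_i\}_{i\in K^c}=\big(\overline{span}\{\varphi_i\}_{i\in K}\big)^\perp$, one checks that when $\{i:\langle h,\varphi_i\rangle\ne 0\}$ is a singleton, $\overline{span}\{\varphi_i\}_{i\in J}=h^\perp$; since also $\overline{span}\{\psi_i\}_{i\in J^c}\subseteq h^\perp$, the two spans for this partition overlap, so $d=0$ there, contradicting (ii). A parallel argument — localizing, via the unconditional convergence of Riesz expansions, a nonzero vector of $W_1^{(J)}\cap W_2^{(J)}$ to a single index — gives $W_1\cap W_2=\{0\}$ for all $J$, as used above.

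The main obstacle is exactly this structural claim. In finite dimensions it is a dimension count: $\dim W_1+\dim W_2=\dim\mathcal H$, so $W_1\cap W_2=\{0\}\iff W_1+W_2=\mathcal H$, and either fails precisely when $d(W_1,W_2)=0$. In infinite dimensions the expansions at play need not be finitely supported, and the real work is to localize an annihilator (or a common vector) down to a single index and then rearrange it into a partition whose two spans genuinely \emph{intersect}, rather than merely span a non-dense subspace; once that localization step is in place, the rest is the subspace-geometry bookkeeping of reductions (a)--(c).
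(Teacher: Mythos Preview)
The paper does not prove Theorem~\ref{distance}; it is quoted in the preliminaries section as a result ``proved in \cite{Bemros}'' and is used only as a tool later on. There is therefore nothing in this paper to compare your proposal against.

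That said, your sketch has a genuine gap, and you have correctly identified where it lies. In the direction $(ii)\Rightarrow(i)$ you need, for each $J$, that $W_1+W_2=\mathcal H$, but $d(W_1,W_2)>0$ only gives $W_1\cap W_2=\{0\}$ together with closedness of $W_1+W_2$; it does not by itself force the sum to be all of $\mathcal H$. Your attempted contradiction enlarges $J_0$ to $J=\{i:\langle h,\varphi_i\rangle=0\}$ and then assumes $J^{c}$ is a singleton. That reduction is exactly the missing step: for an arbitrary $h\perp W_1\cup W_2$ the set $\{i:\langle h,\varphi_i\rangle\neq 0\}$ can be infinite, and nothing in your argument cuts it down. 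The parallel ``localization'' you invoke for $W_1\cap W_2=\{0\}$ in the $(i)\Rightarrow(iii)$ direction is equally unproved, so that implication is also incomplete as written (your bound $\|P_1P_2\|\le 1-\delta$ requires the trivial intersection). If you want to repair this, you should consult the original argument in \cite{Bemros}; the equivalence is not a purely soft subspace-geometry fact and uses more of the Riesz basis structure than your reductions (a)--(c) capture.
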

Throughout this paper, we suppose  $\mathcal{H}$ is a separable Hilbert space, $\mathcal{H}_{m}$ an $m$-dimensional Hilbert space, $I$ a countable index set and $I_{\mathcal{H}}$ is the identity operator on $\mathcal{H}$. For  every $J\subset I$, we show the complement of $J$ by $J^{c}$. Also,  we use of $T_{\varphi_{J}}$ and  $S_{\varphi_{J}}$ to denote the synthesis operator and frame operator of a frame $\varphi$, whenever the index set is  limited to $J\subset I$ and     we use of $[n]$ to denote the set $\{1, 2, ..., n\}$..

\section{Weaving and duality}
In this section, we  concentrate on examining some conditions  under which  a frame with its duals (approximate duals) are woven. This approach  helps us  construct some concrete pairs of woven frames.
\begin{thm}\label{alter dual}
Suppose that $\varphi:=\{\varphi_{i}\}_{i\in I}$ is a redundant  frame so that
\begin{eqnarray}\label{imp4}
\Vert I_{\mathcal{H}}-S_{\varphi}^{-1} \Vert^{2} < \dfrac{A_{\varphi}}{B_{\varphi}}.
\end{eqnarray}
 Then $\varphi$ has  infinitely many  dual frames    $\{\psi_{i}\}_{i\in I}$  for which $\{\varphi_{i}\}_{i\in I}$ and $\{\psi_{i}\}_{i\in I}$ are woven.
\end{thm}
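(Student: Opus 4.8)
The plan is to exhibit an explicit family of duals and reduce the weaving question to Proposition~\ref{prop1.4}. Recall that the canonical dual is $\{S_{\varphi}^{-1}\varphi_i\}_{i\in I}$, which equals $T\varphi$ for the invertible operator $T = S_{\varphi}^{-1}$. Since $T$ is self-adjoint, $\|I_{\mathcal{H}}-T^*\| = \|I_{\mathcal{H}}-T\| = \|I_{\mathcal{H}}-S_{\varphi}^{-1}\|$, so hypothesis \eqref{imp4} is exactly the condition $\|I_{\mathcal{H}}-T\|^2 < A_{\varphi}/B_{\varphi}$ needed to apply Proposition~\ref{prop1.4}. Therefore $\varphi$ and its canonical dual are woven. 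This gives at least one dual with the woven property; the work is to perturb it into infinitely many.

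Next I would use the parametrization of all duals: every dual frame has the form $\psi^{(u)} := \{S_{\varphi}^{-1}\varphi_i + u_i\}_{i\in I}$ where $\{u_i\}_{i\in I}$ is a Bessel sequence satisfying \eqref{dual1}, i.e.\ $\sum_i \langle f,u_i\rangle\varphi_i = 0$ for all $f$. Because $\varphi$ is redundant, its synthesis operator $T_\varphi$ has nontrivial kernel, so there exist nonzero such sequences $\{u_i\}$ (take $\{u_i\}$ to be the image under a coefficient-to-frame map of any nonzero element of $\ker T_\varphi$, suitably arranged); scaling by any $t\in\mathbb{R}$ keeps \eqref{dual1} valid, giving a one-parameter family $\psi^{(tu)}$, all distinct for a fixed nonzero $\{u_i\}$. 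It remains to check that $\psi^{(tu)}$ is woven with $\varphi$ for all sufficiently small $t$, and that there are still infinitely many such $t$.

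For the weaving step I would argue by a perturbation/continuity estimate rather than re-deriving frame bounds from scratch. Fix a partition (here $M=2$, partition $J, J^c$). The weaving $\{\varphi_i\}_{i\in J}\cup\{\psi^{(tu)}_i\}_{i\in J^c}$ differs from the known-good weaving $\{\varphi_i\}_{i\in J}\cup\{S_{\varphi}^{-1}\varphi_i\}_{i\in J^c}$ only by adding $t u_i$ to the vectors indexed by $J^c$. Using $\big|\sum_{i\in J^c}|\langle f,\psi^{(tu)}_i\rangle|^2 - \sum_{i\in J^c}|\langle f,S_\varphi^{-1}\varphi_i\rangle|^2\big|$ and the triangle inequality in $\ell^2$, the lower frame bound of the perturbed weaving is at least $\big(\sqrt{A'} - |t|\sqrt{B_u}\big)^2$, where $A'$ is the universal lower weaving bound from Proposition~\ref{prop1.4} and $B_u$ is the Bessel bound of $\{u_i\}$; crucially this estimate is uniform over all partitions $J$ because $A'$ and $B_u$ are. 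Hence for every $t$ with $|t| < \sqrt{A'}/\sqrt{B_u}$ the pair $(\varphi, \psi^{(tu)})$ is woven, and this is an infinite set of parameters yielding infinitely many distinct dual frames.

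The main obstacle is the uniformity over partitions in the perturbation estimate: one must be careful that the constant controlling the perturbation (the Bessel bound of $\{u_i\}$ restricted to $J^c$) is bounded independently of $J$, which holds since a Bessel bound for the whole sequence bounds every subfamily. A secondary point to nail down is genuinely producing infinitely many \emph{distinct} duals — one should check that $\psi^{(tu)}=\psi^{(su)}$ forces $t=s$ when $\{u_i\}\neq 0$, which is immediate since $t u_i = s u_i$ for all $i$ with some $u_i\neq 0$ forces $t=s$. Everything else is a routine application of Proposition~\ref{prop1.4} and the standard $\ell^2$-triangle-inequality perturbation argument for frame bounds.
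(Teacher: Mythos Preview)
Your proposal is correct and follows essentially the same route as the paper: invoke Proposition~\ref{prop1.4} with $T=S_{\varphi}^{-1}$ to get $\varphi$ woven with its canonical dual (universal lower bound $\mathcal{A}=A'$), then perturb by $t\{u_i\}$ for a nonzero Bessel sequence satisfying \eqref{dual1} (which exists by redundancy) and bound the lower weaving constant uniformly in $J$ via an $\ell^2$-perturbation estimate. The only cosmetic difference is that the paper expands $|\langle f,S_{\varphi}^{-1}\varphi_i+\alpha u_i\rangle|^2$ and controls the cross term with Cauchy--Schwarz against the canonical-dual Bessel bound $1/A_{\varphi}$ (giving the threshold $\epsilon^2 B_U+2\epsilon\sqrt{B_U/A_{\varphi}}<\mathcal{A}$), whereas your direct $\ell^2$-triangle-inequality version yields the slightly cleaner threshold $|t|<\sqrt{A'/B_u}$.
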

\begin{proof}
Using the assumption with the aid of  Proposition \ref{prop1.4} implies that  $\varphi$ and $\{S_{\varphi}^{-1}\varphi_{i}\}_{i\in I}$ are woven frames for $\mathcal{H}$  with the universal  lower bound $\mathcal{A}:=\left(\sqrt{A_{\varphi}}-\sqrt{B_{\varphi}}\Vert I_{\mathcal{H}}-S_{\varphi}^{-1} \Vert\right)^{2}$. Now, let  $U=\{u_{i}\}_{i\in I}$ be a Bessel sequence, which satisfies (\ref{dual1}) and take $\epsilon>0$ so that
\begin{eqnarray}\label{inequality U}
\epsilon^{2} B_{U}+2\epsilon\sqrt{ B_{U}/A_{\varphi}} < \mathcal{A}.
\end{eqnarray}
Then  $\Psi_{\alpha}:=\{S_{\varphi}^{-1}\varphi_{i}+\alpha u_{i}\}_{i\in I}$ is a dual frame of $\varphi$,  for all $0<\alpha<\epsilon$. To show $\varphi$ and $\Psi_{\alpha}$ constitute  woven frames for $\mathcal{H}$, due to Proposition \ref{2.4..} we only need to prove the existence of a universal lower bound.
Suppose $J\subset I$  then
\begin{eqnarray*}
&&\sum_{i\in J}|\langle f,\varphi_{i}\rangle|^{2}+\sum_{i\in J^{c}}|\langle f,S_{\varphi}^{-1}\varphi_{i}+\alpha u_{i}\rangle|^{2}\\
&=& \sum_{i\in J}|\langle f,\varphi_{i}\rangle|^{2}+\sum_{i\in J^{c}}|\langle f,S_{\varphi}^{-1}\varphi_{i}\rangle+ \langle f,\alpha u_{i}\rangle|^{2}\\
&\geq&   \sum_{i\in J}|\langle f,\varphi_{i}\rangle|^{2}+\sum_{i\in J^{c}}\left||\langle f,S_{\varphi}^{-1}\varphi_{i}\rangle|-| \langle f, \alpha u_{i}\rangle|\right|^{2}\\
&\geq&  \sum_{i\in J}|\langle f,\varphi_{i}\rangle|^{2}+\sum_{i\in J^{c}}|\langle f,S_{\varphi}^{-1}\varphi_{i}\rangle|^{2}\\
&-&\sum_{i\in J^{c}}|\langle f, \alpha u_{i}\rangle|^{2}-2 \sum_{i\in J^{c}}|\langle f,S_{\varphi}^{-1}\varphi_{i}\rangle|| \langle f,\alpha u_{i}\rangle|\\
&\geq& \left(\mathcal{A}-\alpha^{2}B_{U}-2\alpha\sqrt{B_{U}/A_{\varphi}}\right)\Vert f\Vert^{2},
\end{eqnarray*}
 and so the lower bound is obtained by (\ref{inequality U}).
\end{proof}
Recall that the notion of approximate  dual for discrete frames introduced by Christensen and Laugesen in \cite{app.} due to the explicit computation of dual frames seems rather intricate. Approximate duals  are not only easier to construct
but also  lead to perfect reconstructions.
 Let $\varphi = \lbrace \varphi_{i}\rbrace_{i\in I}$ be a frame  for $\mathcal{H}$. A  Bessel sequence  $\psi = \lbrace \psi_{i}\rbrace_{i\in I}$ is called an  \textit{approximate dual frame} of $\varphi$, whenever $\Vert I_{\mathcal{H}} - T_{\psi}T_{\varphi}^{*}\Vert < 1$.
Applying Theorem 2.1 of \cite{Javanshiri} we obtain the next result.
\begin{prop}\label{p3.3}
Let $\varphi:=\{\varphi_{i}\}_{i\in I}$ be a redundant  frame for $\mathcal{H}$ and there exists an operator $T\in B(\mathcal{H})$ so that
\begin{eqnarray}\label{app2}
\Vert I_{\mathcal{H}}-T\Vert < 1,\quad  \Vert I_{\mathcal{H}}-T^{*}S_{\varphi}^{-1} \Vert^{2} < \dfrac{A_{\varphi}}{B_{\varphi}}.
\end{eqnarray}
 Then, $\varphi$ has  infinitely many  approximate dual frames such as  $\{\psi_{i}\}_{i\in I}$, for which $\{\varphi_{i}\}_{i\in I}$ and $\{\psi_{i}\}_{i\in I}$ are woven.
\end{prop}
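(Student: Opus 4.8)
The plan is to imitate the proof of Theorem \ref{alter dual}, with the canonical dual $\{S_\varphi^{-1}\varphi_i\}_{i\in I}$ replaced by the ``model'' approximate dual $\psi^{0}:=\{T^{*}S_\varphi^{-1}\varphi_i\}_{i\in I}$. First I would record that $\psi^{0}$ really is an approximate dual: by Theorem 2.1 of \cite{Javanshiri} (or directly, since for $\psi^{0}$ one has $T_{\psi^{0}}T_\varphi^{*}f = T^{*}S_\varphi^{-1}\sum_{i\in I}\langle f,\varphi_i\rangle\varphi_i = T^{*}f$, whence $\|I_\mathcal{H}-T_{\psi^{0}}T_\varphi^{*}\| = \|I_\mathcal{H}-T^{*}\| = \|I_\mathcal{H}-T\| < 1$ by the first inequality of (\ref{app2})), $\psi^{0}$ is an approximate dual of $\varphi$, and it is in fact a frame because $V:=T^{*}S_\varphi^{-1}$ is invertible.

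Next I would show $\varphi$ and $\psi^{0}$ are woven. Since $T$ and $S_\varphi^{-1}$ are invertible, so is $V=T^{*}S_\varphi^{-1}$, and the second inequality of (\ref{app2}) says exactly $\|I_\mathcal{H}-V\|^{2} < A_\varphi/B_\varphi$. Hence Proposition \ref{prop1.4}, applied with $V$ in the role of the invertible operator, shows that $\varphi$ and $V\varphi=\psi^{0}$ are woven with universal lower bound
\[
\mathcal{A} := \Bigl(\sqrt{A_\varphi}-\sqrt{B_\varphi}\,\|I_\mathcal{H}-V^{*}\|\Bigr)^{2} = \Bigl(\sqrt{A_\varphi}-\sqrt{B_\varphi}\,\|I_\mathcal{H}-T^{*}S_\varphi^{-1}\|\Bigr)^{2},
\]
using $\|I_\mathcal{H}-V^{*}\| = \|(I_\mathcal{H}-V)^{*}\| = \|I_\mathcal{H}-V\|$; note $\mathcal{A}>0$ precisely because of the second inequality of (\ref{app2}).

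To obtain infinitely many such approximate duals I would perturb $\psi^{0}$ exactly as in the proof of Theorem \ref{alter dual}. Since $\varphi$ is redundant, $\ker T_\varphi\neq\{0\}$, so I can fix a nonzero Bessel sequence $U=\{u_i\}_{i\in I}$ with $\sum_{i\in I}\langle f,u_i\rangle\varphi_i = 0$ for all $f$ (e.g. $u_i=\overline{c_i}\,g$ for some $0\neq\{c_i\}\in\ker T_\varphi$ and $0\neq g\in\mathcal{H}$); equivalently $T_{U}T_\varphi^{*}=0$, this operator being the adjoint of $T_\varphi T_{U}^{*}$. For $\alpha>0$ set $\psi^{\alpha}_i := T^{*}S_\varphi^{-1}\varphi_i+\alpha u_i$. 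Then $T_{\psi^{\alpha}}T_\varphi^{*} = T^{*}+\alpha\,T_{U}T_\varphi^{*} = T^{*}$, so each $\psi^{\alpha}$ is again an approximate dual of $\varphi$ (indeed a frame), and the $\psi^{\alpha}$ are pairwise distinct because $U\neq 0$. It then remains to see that $\varphi$ and $\psi^{\alpha}$ are woven for small $\alpha$: by Proposition \ref{2.4..} every weaving is Bessel with a universal bound, while for the lower bound one fixes $J\subset I$ and estimates, via the reverse triangle inequality, Cauchy--Schwarz, the Bessel bound $B_{U}$ of $U$, the upper frame bound $D$ of $\psi^{0}$, and the wovenness bound $\mathcal{A}$ of $(\varphi,\psi^{0})$,
\[
\sum_{i\in J}|\langle f,\varphi_i\rangle|^{2}+\sum_{i\in J^{c}}|\langle f,\psi^{\alpha}_i\rangle|^{2} \;\geq\; \bigl(\mathcal{A}-\alpha^{2}B_{U}-2\alpha\sqrt{D\,B_{U}}\,\bigr)\,\|f\|^{2}.
\]
Choosing $\epsilon>0$ with $\epsilon^{2}B_{U}+2\epsilon\sqrt{D B_{U}}<\mathcal{A}$ then yields a positive universal lower bound for all $0<\alpha<\epsilon$, which finishes the proof.

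Everything here is a routine adaptation of Theorem \ref{alter dual}; the only genuinely new point — and the one I would be most careful about — is the choice of the model approximate dual $\{T^{*}S_\varphi^{-1}\varphi_i\}$, engineered so that hypothesis (\ref{app2}) splits cleanly: ``$\|I_\mathcal{H}-T\|<1$'' is exactly what makes it an approximate dual, while ``$\|I_\mathcal{H}-T^{*}S_\varphi^{-1}\|^{2}<A_\varphi/B_\varphi$'' is exactly the hypothesis of Proposition \ref{prop1.4} for the invertible operator $V=T^{*}S_\varphi^{-1}$. The second delicate point is keeping the perturbation $\alpha u_i$ inside the class of approximate duals, which works because the annihilation condition $T_{U}T_\varphi^{*}=0$ is self-adjoint to the condition (\ref{dual1}) already available for redundant frames.
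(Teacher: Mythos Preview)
Your proposal is correct and follows essentially the same route as the paper's proof: the paper also takes $\{T^{*}S_{\varphi}^{-1}\varphi_i\}$ as the model approximate dual, invokes Proposition~\ref{prop1.4} for the woven lower bound $\mathcal{A}=(\sqrt{A_\varphi}-\sqrt{B_\varphi}\|I_{\mathcal{H}}-T^{*}S_\varphi^{-1}\|)^{2}$, and then perturbs by a small multiple of a sequence annihilated by $T_\varphi$. The only cosmetic differences are that the paper parametrizes the perturbation as $\alpha\,\theta^{*}\delta_i$ with $\theta\in B(\mathcal{H},l^{2})$ satisfying $T_\varphi\theta=0$ (equivalent to your $U$ via $\theta=T_{U}^{*}$), cites \cite{Javanshiri} for the approximate-dual structure rather than verifying $T_{\psi^{0}}T_\varphi^{*}=T^{*}$ directly, and records the explicit constant $\sqrt{D}=\|S_\varphi^{-1}T\|\sqrt{B_\varphi}$ in the cross-term estimate where you leave it as a generic upper bound $D$ for $\psi^{0}$.
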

\begin{proof}
Applying Theorem 2.1 of \cite{Javanshiri} the sequence $\{T^{*}S_{\varphi}^{-1}\varphi_{i}+\theta^{*}\delta_{i}\}_{i\in I}$ is an approximate dual of $\varphi$, in which $\{\delta_{i}\}_{i\in I}$ is the standard orthonormal basis of $l^{2}$ and the operator $\theta\in B(\mathcal{H}, l^{2})$ satisfies $T_{\varphi}\theta=0$. Also, by  Proposition \ref{prop1.4}, $\varphi$ and $\{T^{*}S_{\varphi}^{-1}\varphi_{i}\}_{i\in I}$ are woven with the universal lower bound $\left(\sqrt{A_{\varphi}}-\sqrt{B_{\varphi}}\Vert I_{\mathcal{H}}-T^{*}S_{\varphi}^{-1}\Vert\right)^{2}$. Using an argument similar to the proof of Theorem \ref{alter dual}  if we take $\epsilon>0$ so that
\begin{eqnarray*}
\epsilon^{2} \Vert \theta\Vert^{2}+2\epsilon\Vert \theta\Vert  \Vert S_{\varphi}^{-1}T\Vert \sqrt{B_{\phi}} < \left(\sqrt{A_{\varphi}}-\sqrt{B_{\varphi}}\Vert I_{\mathcal{H}}-T^{*}S_{\varphi}^{-1}\Vert\right)^{2},
\end{eqnarray*}
then  $\Psi_{\alpha}:=\{T^{*}S_{\varphi}^{-1}\varphi_{i}+\alpha \theta^{*}\delta_{i}\}_{i\in I}$ is an approximate dual frame of $\varphi$,  for all $0<\alpha<\epsilon$. Moreover, similar to the proof of Theorem \ref{alter dual} one can chek that  $\varphi$ and $\Psi_{\alpha}$ are woven frames.
\end{proof}

\begin{cor}Every  redundant Parseval frame  and also  $A$-tight frame  with $A>1/2$
 has  infinitely many   dual frames which are woven. Moreover, all $A$-tight frames have infinitely  many approximate dual frames which are woven with the original frame.
\end{cor}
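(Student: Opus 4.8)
The plan is to specialize Theorem~\ref{alter dual} and Proposition~\ref{p3.3}, using that in all three cases the frame operator is a scalar multiple of $I_{\mathcal{H}}$, so the quantity $\Vert I_{\mathcal{H}}-S_{\varphi}^{-1}\Vert$ appearing in~(\ref{imp4}) can be written down explicitly and the required inequalities checked by inspection.

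First, for a redundant Parseval frame $\varphi$ one has $S_{\varphi}=I_{\mathcal{H}}$ and $A_{\varphi}=B_{\varphi}=1$, hence $\Vert I_{\mathcal{H}}-S_{\varphi}^{-1}\Vert^{2}=0<1=A_{\varphi}/B_{\varphi}$. Thus~(\ref{imp4}) holds and Theorem~\ref{alter dual} immediately produces infinitely many dual frames woven with $\varphi$. Next, for a redundant $A$-tight frame $\varphi$ one has $S_{\varphi}=A\,I_{\mathcal{H}}$, $S_{\varphi}^{-1}=A^{-1}I_{\mathcal{H}}$ and $A_{\varphi}/B_{\varphi}=1$, so~(\ref{imp4}) reads $(1-1/A)^{2}<1$. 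I would observe that, putting $x=1/A>0$, this is just $x(x-2)<0$, i.e.\ $0<x<2$, i.e.\ $A>1/2$ --- exactly the stated hypothesis. Hence Theorem~\ref{alter dual} applies again and gives the second claim.

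For the approximate-dual statement, apply Proposition~\ref{p3.3} to a redundant $A$-tight frame $\varphi$ with the choice $T=\dfrac{2A}{A+1}\,I_{\mathcal{H}}\in B(\mathcal{H})$. Then
\[
\Vert I_{\mathcal{H}}-T\Vert=\Bigl|1-\tfrac{2A}{A+1}\Bigr|=\frac{|1-A|}{A+1}<1,
\]
and, since $T^{*}S_{\varphi}^{-1}=\tfrac{2A}{A+1}\cdot\tfrac1A\,I_{\mathcal{H}}=\tfrac{2}{A+1}I_{\mathcal{H}}$,
\[
\Vert I_{\mathcal{H}}-T^{*}S_{\varphi}^{-1}\Vert^{2}=\Bigl(\tfrac{A-1}{A+1}\Bigr)^{2}<1=\frac{A_{\varphi}}{B_{\varphi}}.
\]
So both inequalities in~(\ref{app2}) hold for every $A>0$, and Proposition~\ref{p3.3} yields infinitely many approximate dual frames woven with $\varphi$. (Any $T=t\,I_{\mathcal{H}}$ with $0<t<\min\{2,2A\}$ works equally well; I chose the symmetric value $t=2A/(A+1)$ only for definiteness.)

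I do not expect a serious obstacle: the corollary is a direct specialization of the two results of this section. The only points needing a little care are the elementary equivalence $(1-1/A)^{2}<1\iff A>1/2$ and picking $T$ in the last part so that the two conditions in~(\ref{app2}) are met simultaneously for all $A>0$. It is also worth flagging that Theorem~\ref{alter dual} and Proposition~\ref{p3.3} require $\varphi$ to be redundant, so the word ``every'' here is to be read under that standing assumption; without redundancy the perturbing Bessel sequence in Theorem~\ref{alter dual} (resp.\ the operator $\theta$ in Proposition~\ref{p3.3}) is forced to vanish, and one cannot claim infinitely many such duals.
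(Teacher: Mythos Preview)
Your argument is correct and is exactly the intended specialization: the paper gives no separate proof of this corollary, leaving it as an immediate consequence of Theorem~\ref{alter dual} and Proposition~\ref{p3.3} for tight frames, and the example that follows in the paper (with $T=\tfrac{1}{2}I_{\mathcal{H}}$ for a Parseval frame) confirms that a scalar multiple of the identity is precisely the choice of $T$ the authors have in mind. Your remark that redundancy is a standing hypothesis (needed so that the perturbing Bessel sequence in Theorem~\ref{alter dual}, resp.\ the operator $\theta$ in Proposition~\ref{p3.3}, can be chosen nonzero) is well taken.
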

\begin{ex}
 Suppose $\mathcal{H}=l^{2}$ with the standard orthonormal basis $\{\delta_{i}\}_{i=1}^{\infty}$ and consider the following known Parseval frame for  $\mathcal{H}$
\begin{eqnarray*}
\varphi=\left\{\delta_{1}, \dfrac{1}{\sqrt{2}}\delta_{2}, \dfrac{1}{\sqrt{2}}\delta_{2}, \dfrac{1}{\sqrt{3}}\delta_{3},   \dfrac{1}{\sqrt{3}}\delta_{3},  \dfrac{1}{\sqrt{3}}\delta_{3}, ... \right\}.
\end{eqnarray*}
Putting
\begin{eqnarray*}
U=\left\{0, \dfrac{1}{\sqrt{2}}\delta_{2}, -\dfrac{1}{\sqrt{2}}\delta_{2}, \dfrac{1}{2\sqrt{3}}\delta_{3},   \dfrac{1}{2\sqrt{3}}\delta_{3},  -\dfrac{1}{\sqrt{3}}\delta_{3},\dfrac{1}{3\sqrt{4}}\delta_{4},\dfrac{1}{3\sqrt{4}}\delta_{4},\dfrac{1}{3\sqrt{4}}\delta_{4},-\dfrac{1}{\sqrt{4}}\delta_{4},  ... \right\},
\end{eqnarray*}
one can easily check  that the Bessel  sequence $U$ satisfies (\ref{dual1}) with the upper bound $B_{U}=1$. Hence, $\varphi+\alpha U$ is a dual frame of  $\varphi$   for all $0< \alpha<\dfrac{1}{3}$,
and
\begin{eqnarray*}
\epsilon^{2} B_{U}+2\epsilon \sqrt{ B_{U}/A_{\varphi}} < 1,
\end{eqnarray*}
where $\epsilon =\dfrac{1}{3}$.
So, $\varphi$ and $\varphi+\alpha U$ are also  woven frames. Moreover, by taking  $\epsilon =\dfrac{1}{5}$, $T=\dfrac{1}{2}I_{\mathcal{H}}$, $\theta f=\{\langle f,u_{i}\rangle\}_{i\in I}$, for all $f\in \mathcal{H}$ and using Proposition \ref{p3.3} we see that  the Bessel sequence  $\dfrac{1}{2}\varphi+\alpha U$ constitutes an  approximate dual  of $\varphi$, which is also  woven with $\varphi$, for all $0< \alpha<\dfrac{1}{5}$.
\end{ex}

In the following theorem, we give  some sufficient  conditions  under which  a frame with its dual    are woven.
\begin{thm}\label{riesz cano}
Let $\varphi=\{\varphi_{i}\}_{i\in I}$ be a frame   for $\mathcal{H}$, then the following assertions hold:
\item [(i)]
if  $\psi=\{\psi_{i}\}_{i\in I}$  is a dual frame of $\varphi$  so that
for any $J \subset I$ the family $\{\varphi_{i}\}_{i\in J}\cup \{\psi_{i}\}_{i\in J^{c}}$ is a frame sequence.
Then $\varphi$  and $\psi$ are woven.\\
\item [(ii)] if $\varphi=\{\varphi_{i}\}_{i\in I}$ is  a Riesz basis   for $\mathcal{H}$, then $\varphi$ and its canonical dual are woven.
\end{thm}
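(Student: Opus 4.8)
The plan is to settle (i) by combining Proposition \ref{2.4..} with Theorem \ref{2.0}, and then to deduce (ii) from (i).

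For (i): since $\varphi$ and $\psi$ are frames they are Bessel, so Proposition \ref{2.4..} already provides a universal upper bound $B_\varphi+B_\psi$ for every weaving; hence only a universal \emph{lower} bound is at stake, and by Theorem \ref{2.0} it suffices to show $\varphi$ and $\psi$ are weakly woven, i.e. that for each $J\subset I$ the family $\{\varphi_i\}_{i\in J}\cup\{\psi_i\}_{i\in J^c}$ is a frame for $\mathcal{H}$. By hypothesis this family is a frame for its closed span $W_J$, so the only thing to verify is completeness, $W_J=\mathcal{H}$, equivalently $W_J^\perp=\{0\}$. Here I would use the dual relation: if $f\in W_J^\perp$, so that $\langle f,\varphi_i\rangle=0$ for $i\in J$ and $\langle f,\psi_i\rangle=0$ for $i\in J^c$, then
\[
\|f\|^2=\Big\langle\sum_{i\in I}\langle f,\psi_i\rangle\varphi_i,\,f\Big\rangle=\sum_{i\in I}\langle f,\psi_i\rangle\langle\varphi_i,f\rangle ,
\]
and in this series every term with $i\in J$ vanishes because $\langle\varphi_i,f\rangle=0$, while every term with $i\in J^c$ vanishes because $\langle f,\psi_i\rangle=0$; thus $\|f\|^2=0$. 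So $W_J=\mathcal{H}$, each weaving is a frame for $\mathcal{H}$, and Theorem \ref{2.0} upgrades this to uniform bounds, proving (i).

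For (ii): the key observation is that the canonical dual $\{S_\varphi^{-1}\varphi_i\}_{i\in I}$ of a Riesz basis $\varphi$ is its biorthogonal system. I would make this explicit by writing $\varphi_i=Ue_i$ for an orthonormal basis $\{e_i\}_{i\in I}$ of $\mathcal{H}$ and an invertible $U\in B(\mathcal{H})$; then $S_\varphi=UU^*$, hence $S_\varphi^{-1}\varphi_i=(U^*)^{-1}e_i$, so $\langle\varphi_i,S_\varphi^{-1}\varphi_j\rangle=\langle e_i,e_j\rangle=\delta_{ij}$. Consequently, for every $J\subset I$ the subspaces $\overline{span}\{\varphi_i\}_{i\in J}$ and $\overline{span}\{S_\varphi^{-1}\varphi_i\}_{i\in J^c}$ are orthogonal, and since $\{\varphi_i\}_{i\in J}$ and $\{S_\varphi^{-1}\varphi_i\}_{i\in J^c}$ are Riesz sequences (subfamilies of Riesz bases), their union is again a Riesz sequence, in particular a frame sequence. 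Thus the hypothesis of part (i) holds with $\psi=\{S_\varphi^{-1}\varphi_i\}_{i\in I}$, and (i) yields that $\varphi$ and its canonical dual are woven. (Alternatively, the same orthogonality gives $d(\overline{span}\{\varphi_i\}_{i\in J},\overline{span}\{S_\varphi^{-1}\varphi_i\}_{i\in J^c})\ge 1$ for all $J$, so woven-ness also follows from (iii)$\Rightarrow$(i) of Theorem \ref{distance}.)

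The one genuinely substantive point is the completeness argument in (i): recognizing that splitting the dual expansion $f=\sum_{i\in I}\langle f,\psi_i\rangle\varphi_i$ along $J$ and $J^c$ annihilates $W_J^\perp$ term by term. Everything else — the Bessel bookkeeping, the biorthogonality of the canonical dual of a Riesz basis, and the fact that two mutually orthogonal Riesz sequences concatenate to a Riesz sequence — is routine; the only caution is to let Theorem \ref{2.0} perform the passage from ``every weaving is a frame for $\mathcal{H}$'' to uniform universal bounds.
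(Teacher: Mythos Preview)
Your proof of (i) is correct and essentially identical to the paper's: both show completeness of each weaving by splitting the dual expansion $f=\sum_{i\in I}\langle f,\psi_i\rangle\varphi_i$ along $J$ and $J^c$, and then invoke Theorem~\ref{2.0}.

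For (ii) there is a small difference in routing. The paper does not reduce to (i); instead it goes straight to Theorem~\ref{distance}: using the biorthogonality $\langle\varphi_i,S_\varphi^{-1}\varphi_j\rangle=\delta_{ij}$ (which you also establish), it observes that for $X\in\overline{\operatorname{span}}\{\varphi_i\}_{i\in J}$ and $Y\in\overline{\operatorname{span}}\{S_\varphi^{-1}\varphi_i\}_{i\in J^c}$ with $\|X\|=1$ or $\|Y\|=1$ one has $\|X+Y\|^2=\|X\|^2+\|Y\|^2\ge 1$, so $d_{\varphi_J,\psi_{J^c}}\ge 1$ for every $J$. This is precisely your parenthetical alternative. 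Your primary route---feeding the orthogonality back into part (i) by first checking that each weaving is a Riesz (hence frame) sequence---is a valid and slightly more self-contained variant, at the cost of the extra routine step of concatenating two mutually orthogonal Riesz sequences; the paper's route is a one-line appeal to Theorem~\ref{distance} once the cross term vanishes. Both arguments hinge on the same single idea (biorthogonality of a Riesz basis and its canonical dual), so the difference is packaging rather than substance.
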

\begin{proof}
Suppose $f\in \mathcal{H}$ so that $f\perp \{\varphi_{i}\}_{i\in J}\cup\{\psi_{i}\}_{i\in J^{c}}$. Then
\begin{eqnarray*}
\Vert f\Vert^{2} &=& \langle f, f\rangle = \left\langle f, \sum_{i\in I}\langle f, \psi_{i}\rangle \varphi_{i}\right\rangle \\
&=& \left\langle f, \sum_{i\in J}\langle f, \psi_{i}\rangle \varphi_{i}\right\rangle + \left\langle f, \sum_{i\in J^{c}}\langle f, \psi_{i}\rangle \varphi_{i}\right\rangle\\
&=& \sum_{i\in J}\langle f, \psi_{i}\rangle \langle f, \varphi_{i}\rangle + \sum_{i\in J^{c}}\langle f,\psi_{i}\rangle \langle f, \varphi_{i}\rangle =0.
\end{eqnarray*}
Hence, $f=0$ and consequently $\overline{\operatorname{span}} \left\{\{\varphi_{i}\}_{i\in J}\cup\{\psi_{i}\}_{i\in J^{c}}\right\}= \mathcal{H}$  for all $J\subseteq I$. This implies that $\varphi$ and $\psi$ are weakly  woven,  so Theorem \ref{2.0} follows the desired  result. To show $(ii)$, let $J\subset I$,  $X=\sum_{i\in J}a_{i}\varphi_{i}\in \overline{\operatorname{span}}\{\varphi_{i}\}_{i\in J}$ and $Y=\sum_{i\in J^{c}}b_{i}S_{\varphi}^{-1}\varphi_{i}\in \overline{\operatorname{span}}\{S_{\varphi}^{-1}\varphi_{i}\}_{i\in J^{c}}$ so that  $\Vert X\Vert = 1$ or $\Vert Y\Vert = 1$, then
 we obtain
\begin{eqnarray*}
&&\left\Vert X+Y\right\Vert^{2}\\
&=&\left\Vert \sum_{i\in J}a_{i}\varphi_{i}+\sum_{i\in J^{c}}b_{i}S_{\varphi}^{-1}\varphi_{i}\right\Vert^{2}\\
&=&\left\Vert \sum_{i\in J}a_{i}\varphi_{i}\right\Vert^{2}+\left\Vert \sum_{i\in J^{c}}b_{i}S_{\varphi}^{-1}\varphi_{i}\right\Vert^{2}+2Re \left\langle \sum_{i\in J}a_{i}\varphi_{i}, \sum_{i\in J^{c}}b_{i}S_{\varphi}^{-1}\varphi_{i}\right\rangle\\
&=&\left\Vert \sum_{i\in J}a_{i}\varphi_{i}\right\Vert^{2}+\left\Vert \sum_{i\in J^{c}}b_{i}S_{\varphi}^{-1}\varphi_{i}\right\Vert^{2} \geq 1.
\end{eqnarray*}
Thus, $\varphi$  and $\psi$  are woven by Theorem \ref{distance}. 
\end{proof}
The excess of a frame $\varphi$, denoted by $E(\varphi)$, is the greatest integer $m$ so that $m$ elements can be deleted from the frame  and still leave a frame, or $+\infty$ if there is no upper bound to the number of elements that can be removed. Moreover, it is known  that $E(\varphi)=\dim(Ker T_{\varphi})$ and zero excess frames are Riesz bases, see \cite{casaza balan2003}.
The above theorem, shows that every frame $\varphi$ which $E(\varphi)=0$ is woven with its canonical dual. Also, every frame $\varphi=\{\varphi_{i}\}_{i\in I}$  with $E(\varphi)=n$ can be written by $\varphi=\{\varphi_{i}\}_{i\in I\setminus \lbrace i_{1},... i_{n}\rbrace}\cup \{\varphi_{i_{1}}, ... \varphi_{i_{n
}}\}$, where $\{\varphi_{i}\}_{i\in I\setminus \lbrace i_{1},... i_{n}\rbrace}$ is a Riesz basis for $\mathcal{H}$ and  $\{\varphi_{i_{1}}, ... \varphi_{i_{n
}}\}$ are the  redundant elements of $\varphi$.
 In the next theorem we show that under some condition  frames are woven with their canonical duals.
\begin{thm}
Let $\varphi=\{\varphi_{i}\}_{i\in I}$ be a frame  for $\mathcal{H}$ so that the norm of its  redundant elements be small enough. Then $\varphi$  is woven with its canonical dual.
\end{thm}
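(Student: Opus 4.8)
The plan is to view $\varphi$ as a small-norm perturbation of a Riesz basis and to transport the weaving property of a Riesz basis with its canonical dual (Theorem \ref{riesz cano}(ii)) along this perturbation. Write $\varphi=\{\varphi_{i}\}_{i\in I_{0}}\cup\{\varphi_{i_{1}},\dots,\varphi_{i_{n}}\}$ as in the discussion preceding the theorem, where $n=E(\varphi)$, $I_{0}=I\setminus\{i_{1},\dots,i_{n}\}$, the family $\phi:=\{\varphi_{i}\}_{i\in I_{0}}$ is a Riesz basis with bounds $A_{\phi}\leq B_{\phi}$, and $\varphi_{i_{1}},\dots,\varphi_{i_{n}}$ are the redundant elements. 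By Theorem \ref{riesz cano}(ii), $\phi$ is woven with its canonical dual $\psi^{0}:=\{S_{\phi}^{-1}\varphi_{i}\}_{i\in I_{0}}$, say with universal lower weaving bound $A_{0}>0$; I will show the analogous statement for $\varphi$ and $\psi:=\{S_{\varphi}^{-1}\varphi_{i}\}_{i\in I}$ persists once $\eta:=\sum_{k=1}^{n}\|\varphi_{i_{k}}\|^{2}$ is small in terms of $A_{0},A_{\phi},B_{\phi}$.

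First I would record the operator estimate. Since $S_{\varphi}=S_{\phi}+R$ with $R:=\sum_{k=1}^{n}\langle\cdot,\varphi_{i_{k}}\rangle\varphi_{i_{k}}\geq0$ and $\|R\|\leq\eta$, both $S_{\varphi}$ and $S_{\phi}$ dominate $A_{\phi}I_{\mathcal{H}}$, so $\|S_{\varphi}^{-1}\|\leq A_{\phi}^{-1}$ and $\|S_{\phi}^{-1}\|\leq A_{\phi}^{-1}$; then the identity $S_{\phi}^{-1}-S_{\varphi}^{-1}=S_{\varphi}^{-1}RS_{\phi}^{-1}$ gives
\begin{eqnarray*}
\delta:=\|S_{\varphi}^{-1}-S_{\phi}^{-1}\|\leq \eta/A_{\phi}^{2},
\end{eqnarray*}
which tends to $0$ as the norms of the redundant elements tend to $0$.

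Next I would carry out the weaving estimate, which is the core of the argument. Fix $J\subseteq I$ and $f\in\mathcal{H}$, and put $J_{0}:=J\cap I_{0}$, so $I_{0}\setminus J_{0}=J^{c}\cap I_{0}$. Discarding the finitely many non-negative terms of the weaving indexed by the redundant elements, writing $\langle f,S_{\varphi}^{-1}\varphi_{i}\rangle=\langle f,S_{\phi}^{-1}\varphi_{i}\rangle+\langle(S_{\varphi}^{-1}-S_{\phi}^{-1})f,\varphi_{i}\rangle$, and using the elementary inequality $\sum_{i}|a_{i}+b_{i}|^{2}\geq\sum_{i}|a_{i}|^{2}-2\big(\sum_{i}|a_{i}|^{2}\big)^{1/2}\big(\sum_{i}|b_{i}|^{2}\big)^{1/2}$ together with the Bessel bound $A_{\phi}^{-1}$ of $\psi^{0}$ and the Bessel bound $B_{\phi}$ of $\phi$ (so that $\sum_{i\in I_{0}\setminus J_{0}}|\langle(S_{\varphi}^{-1}-S_{\phi}^{-1})f,\varphi_{i}\rangle|^{2}\leq B_{\phi}\delta^{2}\|f\|^{2}$), I obtain
\begin{eqnarray*}
&&\sum_{i\in J}|\langle f,\varphi_{i}\rangle|^{2}+\sum_{i\in J^{c}}|\langle f,S_{\varphi}^{-1}\varphi_{i}\rangle|^{2}\\
&\geq&\sum_{i\in J_{0}}|\langle f,\varphi_{i}\rangle|^{2}+\sum_{i\in I_{0}\setminus J_{0}}|\langle f,S_{\phi}^{-1}\varphi_{i}\rangle|^{2}-2\sqrt{B_{\phi}/A_{\phi}}\,\delta\,\|f\|^{2}\\
&\geq&\big(A_{0}-2\sqrt{B_{\phi}/A_{\phi}}\,\delta\big)\|f\|^{2},
\end{eqnarray*}
where the last inequality is the lower weaving bound for $(\phi,\psi^{0})$ applied to the partition $\{J_{0},I_{0}\setminus J_{0}\}$ of $I_{0}$. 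Hence as soon as $\eta$ (equivalently $\delta$) is small enough that $2\sqrt{B_{\phi}/A_{\phi}}\,\delta<A_{0}$, every weaving of $\varphi$ with its canonical dual has the strictly positive universal lower bound $A_{0}-2\sqrt{B_{\phi}/A_{\phi}}\,\delta$, while a universal upper bound follows from Proposition \ref{2.4..}; thus $\varphi$ is woven with its canonical dual, and one may take ``small enough'' to mean $\sum_{k=1}^{n}\|\varphi_{i_{k}}\|^{2}<A_{0}A_{\phi}^{5/2}/(2\sqrt{B_{\phi}})$.

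The main obstacle is the weaving estimate, specifically making the comparison between $\psi$ and $\psi^{0}$ uniform over all $J$: the restriction of the error sequence $\{(S_{\varphi}^{-1}-S_{\phi}^{-1})\varphi_{i}\}_{i\in I_{0}}$ to an arbitrary index set $I_{0}\setminus J_{0}$ must be controlled independently of $J$, which I handle by dominating any such subset sum by the full Bessel sum of $\phi$ (at the cost of the factor $B_{\phi}\delta^{2}$). A secondary point is that the decomposition of $\varphi$ into a Riesz basis and its redundant elements is not unique, so ``small enough'' should be understood relative to the Riesz bounds of one fixed such decomposition and the constant $A_{0}$ it produces.
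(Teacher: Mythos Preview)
Your argument is correct and follows the same overall strategy as the paper --- decompose $\varphi$ into a Riesz basis $\phi$ plus redundant elements, invoke Theorem \ref{riesz cano}(ii) for $\phi$, and then run a perturbation estimate --- but the execution differs in a meaningful way. The paper does \emph{not} compare $S_{\varphi}^{-1}\varphi$ with $S_{\phi}^{-1}\phi$ directly; instead it first shows that $\varphi$ is woven with $S_{\varphi}\varphi$ by writing $S_{\varphi}\varphi_{i}=S_{\phi}\varphi_{i}+\sum_{k}\langle\varphi_{i},\varphi_{k}\rangle\varphi_{k}$ and bounding the finite-rank correction explicitly via Cauchy--Schwarz, and only then appeals to an external result (Proposition~11 of \cite{lynch}) to pass from ``$\varphi$ woven with $S_{\varphi}\varphi$'' to ``$\varphi$ woven with $S_{\varphi}^{-1}\varphi$''. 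Your route, using the resolvent identity $S_{\phi}^{-1}-S_{\varphi}^{-1}=S_{\varphi}^{-1}RS_{\phi}^{-1}$ to perturb $S_{\varphi}^{-1}$ against $S_{\phi}^{-1}$, is more direct and self-contained: it avoids the detour through $S_{\varphi}\varphi$ and the dependence on \cite{lynch}. The price is that your smallness threshold $\eta<A_{0}A_{\phi}^{5/2}/(2\sqrt{B_{\phi}})$ is expressed through the Riesz bounds $A_{\phi},B_{\phi}$ of the sub-basis, whereas the paper's threshold $\sum\|\varphi_{i}\|^{2}<\sqrt{A/B_{\varphi}}$ uses the upper bound $B_{\varphi}$ of the full frame and a weaving constant $A$ for the pair $(\phi,S_{\phi}\phi)$; neither is obviously sharper. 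One minor omission: you treat only finite excess $n=E(\varphi)$, while the paper also records the case $E(\varphi)=\infty$; your estimates go through verbatim for a countable redundant set $\sigma$ once $\eta=\sum_{i\in\sigma}\|\varphi_{i}\|^{2}<\infty$, so this is easily patched.
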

\begin {proof}
First let $E(\varphi)=n$, for some $n\in \mathbb{N}$. 
Without loss of the  generality, we can write $\varphi=\{\varphi_{i}\}_{i\in I\setminus [n]}\cup \{\varphi_{i}\}_{i\in [n]}$, where $\phi=\{\varphi_{i}\}_{i\in I\setminus [n]}$ is a Riesz basis for $\mathcal{H}$. Hence,  by  Theorem \ref{riesz cano} $\phi$ and $S_{\phi}^{-1}\phi$ are woven and consequently  $\phi$ and $S_{\phi}\phi$ are woven  with a  universal lower bound  $A$, see Proposition 11 of \cite{lynch}. Now, 
 let 
\begin{eqnarray}\label{ttt}
\sum_{i\in [n]}\Vert \varphi_{i} \Vert^{2}< \sqrt{\dfrac{A}{B_{\varphi}}}.
\end{eqnarray}
We show that   $\varphi$ and its canonical dual    are also  woven. To this end, it is sufficient to prove the existence of a universal lower bound for frames $\varphi$ and $S_{\varphi}\varphi$.  Let $J\subseteq I$ and take $\alpha_{i,k}=\langle \varphi_{i}, \varphi_{k}\rangle$, $1\leq k\leq n$. Then
 \begin{eqnarray*}
&&\sum_{i\in J}\left \vert \alpha_{i,1}\langle f,\varphi_{1}\rangle+ ... + \alpha_{i,n}\langle f,\varphi_{n}\rangle \right\vert^{2}\\
&=&\sum_{i\in J}\left \vert (\alpha_{i,1}, ... , \alpha_{i,n}) . (\overline{\langle f,\varphi_{1}\rangle}, ... , \overline{\langle f,\varphi_{n}\rangle}) \right\vert^{2}\\
&\leq& \sum_{i=1}^{\infty} \left(\vert \alpha_{i,1}\vert^{2}+ ... + \vert \alpha_{i,n}\vert^{2})(\vert \langle f,\varphi_{1}\rangle\vert^{2}+ ... + \vert \langle f,\varphi_{n}\rangle\vert^{2}\right)\\
&\leq& \Vert f \Vert^{2}\left(\Vert \varphi_{1}\Vert^{2}+ ... + \Vert \varphi_{n}\Vert^{2}\right) \left( \sum_{i=1}^{\infty}\vert \langle \varphi_{i},\varphi_{1}\rangle\vert^{2}+ \vert \langle \varphi_{i},\varphi_{2}\rangle\vert^{2}+ ... + \vert \langle \varphi_{i},\varphi_{n}\rangle\vert^{2} \right)\\
&\leq& \Vert f \Vert^{2}\left(\Vert \varphi_{1}\Vert^{2}+ ... + \Vert \varphi_{n}\Vert^{2}\right)\left( B_{\varphi}\Vert \varphi_{1}\Vert^{2}+ ... + B_{\varphi}\Vert \varphi_{n}\Vert^{2} \right)\\
&=&\Vert f \Vert^{2}B_{\varphi}\left( \Vert \varphi_{1}\Vert^{2}+ ... + \Vert \varphi_{n}\Vert^{2}\right)^{2}.
\end{eqnarray*}
Now, without loss of the  generality, let $J\subseteq I\setminus [n]$, then we obtain
  \begin{eqnarray*}
&&\left(\sum_{i\in J}\vert \langle f, S_{\varphi}\varphi_{i}\rangle\vert^{2}+\sum_{i\in J^{c}}\vert \langle f, \varphi_{i}\rangle\vert^{2}\right)^{1/2}\\
&=&\left(\sum_{i\in J}\vert \langle f, S_{\varphi}\varphi_{i}\rangle\vert^{2}+\sum_{i\in J^{c}\setminus [n]}\vert \langle f, \varphi_{i}\rangle\vert^{2}+\sum_{i\in [n]}\vert \langle f, \varphi_{i}\rangle\vert^{2}\right)^{1/2}\\
&=&\left(\sum_{i\in J}\left\vert \langle f, S_{\phi}\varphi_{i}\rangle+\sum_{k\in [n]}\alpha_{i,k}\langle f, \varphi_{k}\rangle\right\vert^{2}+\sum_{i\in J^{c}\setminus [n]}\vert \langle f, \varphi_{k}\rangle\vert^{2}+\sum_{i\in [n]}\vert \langle f, \varphi_{i}\rangle\vert^{2}\right)^{1/2}\\
&\geq& \left( \sum_{i\in J}\vert \langle f, S_{\phi}\varphi_{i}\rangle\vert^{2}+\sum_{i\in J^{c}\setminus [n]}\vert \langle f, \varphi_{i}\rangle\vert^{2}+ \sum_{i\in [n]}\vert \langle f, \varphi_{i}\rangle\vert^{2}\right)^{1/2}\\
&-&\left( \sum_{i\in J}\left\vert \sum_{k\in [n]}\alpha_{i,k}\langle f, \varphi_{k}\rangle\right\vert^{2}\right)^{1/2}\\
&\geq& \left(A\Vert f\Vert^{2}+\sum_{i\in [n]}\vert \langle f, \varphi_{i}\rangle\vert^{2}\right)^{1/2}- \sqrt{B_{\varphi}}\left(\Vert \varphi_{1}\Vert^{2}+ ... + \Vert \varphi_{n}\Vert^{2}\right)\Vert f\Vert\\
&\geq& \sqrt{A}\Vert f\Vert-\sqrt{B_{\varphi}}\left(\Vert \varphi_{1}\Vert^{2}+ ... + \Vert \varphi_{n}\Vert^{2}\right)\Vert f\Vert\\
&=&\left(\sqrt{A}-\sqrt{B_{\varphi}}\sum_{i\in [n]}\Vert \varphi_{i} \Vert^{2}\right)\Vert f\Vert,
\end{eqnarray*}
for every $f\in \mathcal{H}$. Therefore,  the desired result follows provided that (\ref{ttt}) holds. On the other hand,  if $E(\varphi)=\infty$,  we can write $\varphi=\{\varphi_{i}\}_{i\in I\setminus \sigma}\cup \{\varphi_{i}\}_{i\in \sigma}$, where $\vert \sigma \vert=\infty$ and  $\phi=\{\varphi_{i}\}_{i\in I\setminus\sigma}$ is a Riesz basis for $\mathcal{H}$. Similar to the above computation one may  easily  check that   if 
\begin{eqnarray*}
\sum_{i\in \sigma}\Vert \varphi_{i} \Vert^{2}< \sqrt{\dfrac{A}{B_{\varphi}}}
\end{eqnarray*}
where $A$ is a  universal lower bound of woven frames  $\phi$ and $S_{\phi}\phi$. Then $\varphi$ is woven with its canonical dual
\end{proof}

\begin{thm}\label{last}
Let  $\varphi=\{\varphi_{i}\}_{i\in I}$ and $\psi=\{\psi_{i}\}_{i\in I}$  be two frames for $\mathcal{H}$.  The following hold:
\item [(i)]
If  $S_{\varphi}^{-1}\geq I_{\mathcal{H}}$ and $S_{\varphi}S_{\varphi_{J}}=S_{\varphi_{J}}S_{\varphi}$ for all $J\subseteq I$, then $\{\varphi_{i}\}_{i\in I}$ and $\{S_{\varphi}^{-1}\varphi_{i}\}_{i\in I}$ are woven.
\item[(ii)] If $\varphi$ and  $\psi$ are two woven  Riesz bases  and  $T_{1}$,  $T_{2}$ are invertible operators so that  
\begin{eqnarray*}
d_{\varphi_{J},\psi_{J^{c}}} > \max\left\{\Vert T_{1}-T_{2} \Vert \Vert T_{1}^{-1}\Vert, \Vert T_{1}-T_{2} \Vert \Vert T_{2}^{-1}\Vert \right\},  \quad (\textit{for all}  \quad J \subset I)
\end{eqnarray*}
where $d_{\varphi_{J},\psi_{J^{c}}}$  is defined as in Theorem \ref{distance}, then $T_{1}\varphi$ and $T_{2}\psi$ are woven.
\end{thm}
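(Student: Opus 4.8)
For part (i), the plan is to use Theorem~\ref{distance} is not available (it is stated only for Riesz bases), so instead I would work directly with the weaving inequality. Fix $J\subseteq I$ and $f\in\mathcal H$. The goal is a universal lower bound for $\sum_{i\in J}|\langle f,\varphi_i\rangle|^2+\sum_{i\in J^c}|\langle f,S_\varphi^{-1}\varphi_i\rangle|^2$. Write this sum as $\langle S_{\varphi_J}f,f\rangle+\langle S_\varphi^{-1}S_{\varphi_{J^c}}S_\varphi^{-1}f,f\rangle$ (using $S_\varphi^{-1}$ self-adjoint). The commutation hypothesis $S_\varphi S_{\varphi_J}=S_{\varphi_J}S_\varphi$ for all $J$ lets me commute $S_\varphi^{-1}$ past $S_{\varphi_{J^c}}$, so the second term becomes $\langle S_{\varphi_{J^c}}S_\varphi^{-2}f,f\rangle$. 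Since $S_\varphi^{-1}\geq I_{\mathcal H}$ we get $S_\varphi^{-2}\geq I_{\mathcal H}$, hence the second term is $\geq\langle S_{\varphi_{J^c}}f,f\rangle$. Adding, the whole expression is $\geq\langle(S_{\varphi_J}+S_{\varphi_{J^c}})f,f\rangle=\langle S_\varphi f,f\rangle\geq A_\varphi\|f\|^2$. The upper bound follows from Proposition~\ref{2.4..} since both families are Bessel. So $A_\varphi$ is a universal lower bound and the two families are woven.

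For part (ii), the plan is to transfer the distance estimate of Theorem~\ref{distance}(iii) from $(\varphi,\psi)$ to $(T_1\varphi,T_2\psi)$. Since $T_1,T_2$ are invertible, $T_1\varphi$ and $T_2\psi$ are again Riesz bases, and $\overline{\operatorname{span}}\{T_1\varphi_i\}_{i\in J}=T_1\big(\overline{\operatorname{span}}\{\varphi_i\}_{i\in J}\big)$, similarly for $T_2\psi$ on $J^c$. So by Theorem~\ref{distance} it suffices to produce a constant $c>0$ with $d\big(T_1(\overline{\operatorname{span}}\{\varphi_i\}_{i\in J}),\,T_2(\overline{\operatorname{span}}\{\psi_i\}_{i\in J^c})\big)\geq c$ for every $J$. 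Take unit vectors $g$ in the first subspace and $h$ in the second; write $g=T_1x$, $h=T_2y$ with $x\in\overline{\operatorname{span}}\{\varphi_i\}_{i\in J}$, $y\in\overline{\operatorname{span}}\{\psi_i\}_{i\in J^c}$. Then
\begin{eqnarray*}
\|g-h\|=\|T_1x-T_2y\|\geq\|T_1x-T_1y\|-\|T_1y-T_2y\|\geq\|T_1(x-y)\|-\|T_1-T_2\|\,\|y\|.
\end{eqnarray*}
Now $\|y\|=\|T_2^{-1}h\|\leq\|T_2^{-1}\|$ and, more carefully, I would estimate $\|T_1(x-y)\|\geq$ (something involving $d_{\varphi_J,\psi_{J^c}}$). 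The clean way: $x/\|x\|$ and $y$ witness the distance, so $\|x-\|x\|\cdot(y/\text{stuff})\|\ldots$ — here is the delicate bookkeeping. Better: use the symmetric definition of $d$, handling the two cases ($\|x\|=1$ giving the $\|T_1^{-1}\|$ term, $\|y\|=1$ giving the $\|T_2^{-1}\|$ term) separately, and in each case bound $\|x-y\|\geq d_{\varphi_J,\psi_{J^c}}$ by definition of the distance between the subspaces, then $\|T_1(x-y)\|\geq\|x-y\|/\|T_1^{-1}\|$ is the wrong direction — instead one wants $\|g-h\|=\|T_1x-T_2y\|$ and factor $T_1$: $\|g-h\|=\|T_1(x-y)+(T_1-T_2)y\|\geq\|T_1(x-y)\|-\|T_1-T_2\|\|y\|$, and crucially since $x$ is a unit vector times a scalar and $y$ a unit vector (or vice versa), $\|x-y\|\geq d_{\varphi_J,\psi_{J^c}}$, so this is $\geq \|x-y\|\cdot(\text{lower bound for }T_1\text{ on that direction}) - \ldots$. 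I expect the honest constant to come out as $d_{\varphi_J,\psi_{J^c}} - \|T_1-T_2\|\max\{\|T_1^{-1}\|,\|T_2^{-1}\|\}$, which is positive by hypothesis, uniformly in $J$.

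The main obstacle is the bookkeeping in part (ii): correctly tracking which vector is the unit vector in each of the two cases of the $\min$ defining $d$, and extracting the factor $\|T_i^{-1}\|$ with the right inequality direction so that the final constant is exactly the positive quantity guaranteed by the hypothesis. Part (i) is essentially immediate once the commutation relation is used to move $S_\varphi^{-1}$ through $S_{\varphi_{J^c}}$; the only thing to be careful about is that $S_\varphi^{-1}\geq I_{\mathcal H}$ does imply $S_\varphi^{-2}\geq I_{\mathcal H}$ (true since $S_\varphi^{-1}$ is positive and $\geq I$, so its square is $\geq I$). I would also remark that the hypothesis $S_\varphi^{-1}\geq I_{\mathcal H}$ is equivalent to $S_\varphi\leq I_{\mathcal H}$, i.e.\ $B_\varphi\leq 1$, which situates this result alongside the tight-frame corollary above.
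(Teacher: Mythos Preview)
Your argument for part~(i) is correct and is essentially the paper's own computation: the paper writes the frame operator of each weaving as $S_\varphi+(S_\varphi^{-1}-I_{\mathcal H})S_{\varphi_{J^c}}(I_{\mathcal H}+S_\varphi^{-1})$ and invokes positivity of the second summand (commuting positive operators), whereas you reach the equivalent inequality $S_{\varphi_{J^c}}S_\varphi^{-2}\geq S_{\varphi_{J^c}}$ directly; both routes yield $S_{\phi_J}\geq S_\varphi$ and hence the universal lower bound~$A_\varphi$.

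For part~(ii) your strategy matches the paper's (reduce to Theorem~\ref{distance}(iii), split into the two cases defining the distance, factor out $T_1$ or $T_2$), but there is a real gap in your execution. When you write $g=T_1x$ and $h=T_2y$ with, say, $\|h\|=1$, \emph{neither} $x$ nor $y$ is a unit vector, so your claim ``$\|x-y\|\geq d_{\varphi_J,\psi_{J^c}}$'' is unjustified: the definition of the distance requires one of the two vectors to lie on the unit sphere of its subspace. The paper's fix, which you are missing, is to normalise by $\|y\|$: one writes
\[
\|x-y\|=\|y\|\left\|\frac{x}{\|y\|}-\frac{y}{\|y\|}\right\|\geq \|y\|\,d_{\varphi_J,\psi_{J^c}},
\]
since $y/\|y\|$ is a unit vector in $\overline{\operatorname{span}}\{\psi_i\}_{i\in J^c}$. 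Combined with $\|T_1(x-y)\|\geq\|T_1^{-1}\|^{-1}\|x-y\|$ (which you stated correctly and then, mistakenly, called ``the wrong direction'') and $\|y\|\geq\|T_2\|^{-1}$ (from $\|h\|=1$), this yields the explicit lower bound
\[
\|g-h\|\geq\bigl(d_{\varphi_J,\psi_{J^c}}\|T_1^{-1}\|^{-1}-\|T_1-T_2\|\bigr)\|T_2\|^{-1},
\]
and symmetrically with the roles of $T_1,T_2$ swapped for the other half of the distance. The hypothesis makes both quantities positive. Your anticipated constant $d_{\varphi_J,\psi_{J^c}}-\|T_1-T_2\|\max\{\|T_1^{-1}\|,\|T_2^{-1}\|\}$ has the right sign threshold but is not what the argument actually produces; once you insert the normalisation step the computation closes with the constants above.
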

\begin{proof}
We first show $(i)$. Consider $\phi_{J}=\{\varphi_{i}\}_{i\in J}\cup \{S_{\varphi}^{-1}\varphi_{i}\}_{i\in J^{c}}$. Then $\phi_{J}$  is a Bessel sequence for  all $J\subseteq I$, and
\begin{eqnarray*}
S_{\phi_{J}}f&=& \sum_{i\in J} \langle f, \varphi_{i}\rangle \varphi_{i} + \sum_{i\in J^{c}} \langle f, S_{\varphi}^{-1}\varphi_{i}\rangle S_{\varphi}^{-1}\varphi_{i} \\
&=& S_{\varphi_{J}}f +S_{\varphi}^{-1}S_{\varphi_{J^{c}}}S_{\varphi}^{-1}f\\
&=& S_{\varphi}f-S_{\varphi_{J^{c}}}f +S_{\varphi}^{-1}S_{\varphi_{J^{c}}}S_{\varphi}^{-1}f\\
&=& S_{\varphi}f+ (S_{\varphi}^{-1}-I_{\mathcal{H}})S_{\varphi_{J^{c}}}(I_{\mathcal{H}}+S_{\varphi}^{-1})f
\end{eqnarray*}
for each $f\in \mathcal{H}$. Since $ (S_{\varphi}^{-1}-I_{\mathcal{H}})S_{\varphi_{J^{c}}}(I_{\mathcal{H}}+S_{\varphi}^{-1})$ is a positive operator, we have
\begin{eqnarray*}
S_{\phi_{J}}\geq S_{\varphi}.
\end{eqnarray*}
This implies that $T^{*}_{\phi_{J}}$ is injective and so $\phi_{J}$ is a frame   for all $J$. Hence, $(i)$ is obtained.
To show $(ii)$,  let $J\subset I$, $f=\sum_{i\in J}c_{i}T_{1}\varphi_{i}$ and $g=\sum_{i\in J^{c}}d_{i}T_{2}\psi_{i}$. Suppose $\Vert g\Vert = 1$ then
\begin{eqnarray*}
\Vert f-g\Vert &=& \left\Vert \sum_{i\in J}c_{i}T_{1}\varphi_{i} -\sum_{i\in J^{c}}d_{i}T_{2}\psi_{i} \right\Vert\\
&=& \left\Vert \sum_{i\in J}c_{i}T_{1}\varphi_{i} - \sum_{i\in J^{c}}d_{i}T_{1}\psi_{i}+ \sum_{i\in J^{c}}d_{i}T_{1}\psi_{i} -\sum_{i\in J^{c}}d_{i}T_{2}\psi_{i} \right\Vert\\
&\geq& \left\Vert T_{1}\left( \sum_{i\in J}c_{i}\varphi_{i}-\sum_{i\in J^{c}}d_{i}\psi_{i} \right)\right\Vert - \left\Vert (T_{1}-T_{2})\sum_{i\in J^{c}}d_{i}\psi_{i}\right\Vert \\
&\geq&  \left\Vert T_{1}^{-1}\right\Vert^{-1} \left\Vert \sum_{i\in J^{c}}d_{i}\psi_{i}\right\Vert  \left\Vert \dfrac{\sum_{i\in J}c_{i}\varphi_{i}}{\left\Vert \sum_{i\in J^{c}}d_{i}\psi_{i}\right\Vert}-\dfrac{\sum_{i\in J^{c}}d_{i}\psi_{i}}{\left\Vert \sum_{i\in J^{c}}d_{i}\psi_{i}\right\Vert }\right\Vert\\
&-& \Vert T_{1}-T_{2}\Vert \left\Vert \sum_{i\in J^{c}}d_{i}\psi_{i}\right\Vert \\
&\geq& \left( d_{\varphi_{J},\psi_{J^{c}}}\Vert T_{1}^{-1}\Vert^{-1}-\Vert T_{1}-T_{2}\Vert\right)  \left\Vert \sum_{i\in J^{c}}d_{i}\psi_{i}\right\Vert\\
&\geq& \left( d_{\varphi_{J},\psi_{J^{c}}}\Vert T_{1}^{-1}\Vert^{-1}-\Vert T_{1}-T_{2}\Vert\right)\Vert T_{2}\Vert^{-1}>0.
\end{eqnarray*}
On the other hand, if $\Vert f\Vert = 1$ then similarly we can write
\begin{eqnarray*}
\Vert f-g\Vert
&=& \left\Vert \sum_{i\in J}c_{i}T_{1}\varphi_{i} - \sum_{i\in J}c_{i}T_{2}\varphi_{i}+ \sum_{i\in J}c_{i}T_{2}\varphi_{i} -\sum_{i\in J^{c}}d_{i}T_{2}\psi_{i} \right\Vert\\
&\geq& \left\Vert T_{2}\left( \sum_{i\in J}c_{i}\varphi_{i}-\sum_{i\in J^{c}}d_{i}\psi_{i} \right)\right\Vert - \left\Vert (T_{1}-T_{2})\sum_{i\in J}c_{i}\varphi_{i}\right\Vert \\
&\geq& \left( d_{\varphi_{J},\psi_{J^{c}}}\Vert T_{2}^{-1}\Vert^{-1}-\Vert T_{1}-T_{2}\Vert\right)\Vert T_{1}\Vert^{-1}>0.
\end{eqnarray*}
Hence, by considering
\begin{eqnarray*}
d_{1}=\left( d_{\varphi_{J},\psi_{J^{c}}}\Vert T_{1}^{-1}\Vert^{-1}-\Vert T_{1}-T_{2}\Vert\right)\Vert T_{2}\Vert^{-1},
\end{eqnarray*}
\begin{eqnarray*}
d_{2}=\left( d_{\varphi_{J},\psi_{J^{c}}}\Vert T_{2}^{-1}\Vert^{-1}-\Vert T_{1}-T_{2}\Vert\right)\Vert T_{1}\Vert^{-1},
\end{eqnarray*}
and taking
$c:=\min\{ d_{1}, d_{2}\}$
we obtain $d_{T_{1}\varphi_{J},T_{2}\psi_{J^{c}}}\geq c>0$. Thus, the result follows by Theorem  \ref{distance}.
\end{proof}
We know that applying two different operators to woven frames can give frames which are not woven, see Example 2 of \cite{Bemros} . The above theorem  presents   a condition  where different operators   preserve the weaving property, and the canonical duals of two woven frames are woven  as a consequence.
\begin{cor}
Let $\{\varphi_{i}\}_{i\in I}$ and $\psi=\{\psi_{i}\}_{i\in I}$  be two woven  Riesz bases   for $\mathcal{H}$ so that for every $J\subset I$
\begin{eqnarray*}
d_{\varphi_{J},\psi_{J^{c}}} > \max\left\{\Vert S_{\varphi}^{-1}-S_{\psi}^{-1} \Vert \Vert S_{\varphi}\Vert, \Vert S_{\varphi}^{-1}-S_{\psi}^{-1} \Vert \Vert S_{\psi}\Vert \right\}.
\end{eqnarray*}
Then $S_{\varphi}^{-1}\varphi$ and $S_{\psi}^{-1}\psi$ are also woven.
\end{cor}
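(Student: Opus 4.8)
The plan is to obtain this corollary as a direct specialization of Theorem~\ref{last}(ii). First I would record the elementary fact that, since $\varphi=\{\varphi_{i}\}_{i\in I}$ and $\psi=\{\psi_{i}\}_{i\in I}$ are Riesz bases (in particular frames), their frame operators $S_{\varphi}$ and $S_{\psi}$ are bounded, self-adjoint and invertible, and hence so are $S_{\varphi}^{-1}$ and $S_{\psi}^{-1}$. I would therefore apply Theorem~\ref{last}(ii) with the invertible operators
\begin{eqnarray*}
T_{1}:=S_{\varphi}^{-1}, \qquad T_{2}:=S_{\psi}^{-1},
\end{eqnarray*}
for which $T_{1}^{-1}=S_{\varphi}$ and $T_{2}^{-1}=S_{\psi}$.

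With this choice, the hypothesis of Theorem~\ref{last}(ii), namely that for every $J\subset I$
\begin{eqnarray*}
d_{\varphi_{J},\psi_{J^{c}}} > \max\left\{\Vert T_{1}-T_{2} \Vert \Vert T_{1}^{-1}\Vert,\ \Vert T_{1}-T_{2} \Vert \Vert T_{2}^{-1}\Vert \right\},
\end{eqnarray*}
becomes precisely
\begin{eqnarray*}
d_{\varphi_{J},\psi_{J^{c}}} > \max\left\{\Vert S_{\varphi}^{-1}-S_{\psi}^{-1} \Vert \Vert S_{\varphi}\Vert,\ \Vert S_{\varphi}^{-1}-S_{\psi}^{-1} \Vert \Vert S_{\psi}\Vert \right\},
\end{eqnarray*}
which is exactly the distance assumption imposed in the Corollary. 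Combined with the standing assumption that $\varphi$ and $\psi$ are woven Riesz bases, all hypotheses of Theorem~\ref{last}(ii) are satisfied.

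Invoking Theorem~\ref{last}(ii) then gives directly that $T_{1}\varphi=\{S_{\varphi}^{-1}\varphi_{i}\}_{i\in I}$ and $T_{2}\psi=\{S_{\psi}^{-1}\psi_{i}\}_{i\in I}$ are woven; since these are, respectively, the canonical duals of $\varphi$ and $\psi$, this is the assertion to be proved. There is essentially no obstacle beyond verifying the hypotheses: the only points worth noting explicitly are that the frame operators of Riesz bases and their inverses are invertible (so that Theorem~\ref{last}(ii) is applicable) and that the image of a Riesz basis under an invertible operator is again a Riesz basis (so that $S_{\varphi}^{-1}\varphi$ and $S_{\psi}^{-1}\psi$ genuinely are Riesz bases and the statement is meaningful in the same category). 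Accordingly, I expect the proof to be a short paragraph that merely unwinds the notation and cites Theorem~\ref{last}(ii).
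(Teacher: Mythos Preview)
Your proposal is correct and is exactly the intended argument: the paper states this corollary immediately after Theorem~\ref{last} as a direct consequence (with no separate proof), obtained precisely by specializing Theorem~\ref{last}(ii) to $T_{1}=S_{\varphi}^{-1}$ and $T_{2}=S_{\psi}^{-1}$.
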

\section{ Woven frames and perturbations}
In this section, we give some conditions which frames with their perturbations constitute woven frames.
  Proposition \ref{prop1.4} states perturbed frames as the image of an invertible operator  $T$ of    a given frame.
In the following,  we obtain some other  invertible operators  $T$ for which   $\varphi$ and $T\varphi$ are woven frames. First, we recall a notion of \cite{cahil}.
Let  $\Lambda = \{\lambda_{i}\}_{i=1}^{n}$ and $\Omega=\{\gamma_{i}\}_{i=1}^{n}$ be orthonormal bases for $\mathcal{H}_{n}$. Also, let  $\alpha=\{\alpha_{i}\}_{i=1}^{n}$  and $\beta=\{\beta_{i}\}_{i=1}^{n}$ be  sequences of positive constants. An operator $T:\mathcal{H}_{n}\rightarrow \mathcal{H}_{n}$ is called admissible for $( \Lambda, \Omega, \alpha, \beta)$, if there exists an orthonormal basis $\{e_{i}\}_{i=1}^{n}$ for $\mathcal{H}_{n}$ satisfying
\begin{eqnarray*}
T^{*}\lambda_{i}=\sum_{k=1}^{n} \sqrt{\dfrac{\alpha_{i}}{\beta_{k}}}\langle e_{i}, \gamma_{k}\rangle \gamma_{k}, \quad (i=1,2,..., n).
\end{eqnarray*}
Moreover,  in the above definition if $\Lambda=\Omega$ and $\alpha=\beta$ we say that $T$ is admissible for $( \Lambda,  \alpha)$.

Now,
suppose
$\varphi=\{\varphi_{i}\}_{i=1}^{m}$ is a frame   for $n$-dimensional Hilbert space $\mathcal{H}_{n}$. Also, let $\Lambda=\lbrace \lambda_{i}\rbrace_{i=1}^{n}$ be eigenvectors of $S_{\varphi}$ with eigenvalues $\alpha=\lbrace \alpha_{i}\rbrace_{i=1}^{n}$. If $T$ is an invertible operator on $\mathcal{H}_{n}$, which is admissible for $(\Lambda,\alpha)$ then $\varphi$ and $T\varphi=\{T\varphi_{i}\}_{i=1}^{n}$ are woven.
More precisely, since  $T$ is admissible for $(\Lambda,\alpha)$ so $S_{\varphi}=TS_{\varphi}T^{*}$, by  Theorem 2.12 of \cite{cahil}. On the other hand, $T\varphi$ is a frame for $\mathcal{H}_{n}$ with the frame operator $S_{T\varphi}=TS_{\varphi}T^{*}$. Hence,
for all $J\subseteq \{1,2,...,M\}$ and by considering $\phi_{J}=\{\varphi_{i}\}_{i\in J}\cup \{T\varphi_{i}\}_{i\in J^{c}}$
we obtain
\begin{eqnarray*}
S_{\phi_{J}}=S_{\varphi_{J}}+S_{T\varphi_{J^{c}}}=S_{\varphi},
\end{eqnarray*}
where $S_{\varphi_{J}}$ and $S_{T\varphi_{J^{c}}}$ are  the frame operators of $\{\varphi_{i}\}_{i\in J}$ and $\{T\varphi_{i}\}_{i\in J^{c}}$, respectively. Thus $S_{\phi_{J}}$, the frame operator of $\phi_{J}$,  is a positive and  invertible operator on $\mathcal{H}_{n}$ and so $\phi_{J}$ is a frame, i.e., $\varphi$ and $T\varphi$ are woven.

 The first results on perturbation of woven frames was given in \cite{Bemros}, then authors in \cite{vash.dep, Vashisht} extended some of these results.
In the sequel, we also  extend     Theorem 6.1 of  \cite{Bemros} to obtaion some new results.
\begin{thm}\label{perturbation1}
Suppose $\varphi=\{\varphi_{i}\}_{i\in I}$ and $\psi=\{\psi_{i}\}_{i\in I}$ are two
frames so that for all sequences of scalars $\{c_{i}\}_{i\in I}$ we have
\begin{eqnarray*}
\left\Vert \sum_{i\in I}c_{i}(\varphi_{i}-\psi_{i})\right\Vert \leq \lambda_{1}\left\Vert \sum_{i\in I}c_{i}\varphi_{i}\right\Vert+\lambda_{2}\left\Vert \sum_{i\in I}c_{i}\psi_{i}\right\Vert+\mu \left( \sum_{i\in I}\vert c_{i}\vert^{2} \right)^{1/2},
\end{eqnarray*}
for some positive numbers $\lambda_{1}, \lambda_{2}, \mu$, where
\begin{eqnarray*}
\lambda_{1}\sqrt{B_{\varphi}}+\lambda_{2}\sqrt{B_{\psi}}+ \mu <\sqrt{A_{\varphi}}.
\end{eqnarray*}
Then, $\varphi$ and $\psi$ are woven.
\end{thm}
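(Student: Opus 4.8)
The plan is to exhibit a lower frame bound for every weaving that does not depend on the chosen partition; the matching upper bound is automatic, since $\varphi$ and $\psi$ are Bessel and Proposition \ref{2.4..} gives every family $\{\varphi_{i}\}_{i\in J}\cup\{\psi_{i}\}_{i\in J^{c}}$ the Bessel bound $B_{\varphi}+B_{\psi}$. So fix $J\subseteq I$, write $\chi_{i}=\varphi_{i}$ for $i\in J$ and $\chi_{i}=\psi_{i}$ for $i\in J^{c}$, and let $T_{\chi}\colon l^{2}\to\mathcal{H}$, $T_{\chi}\{c_{i}\}=\sum_{i\in J}c_{i}\varphi_{i}+\sum_{i\in J^{c}}c_{i}\psi_{i}$, be the synthesis operator of $\chi=\{\chi_{i}\}_{i\in I}$. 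The goal is to show that for each $f\in\mathcal{H}$ there is $c\in l^{2}$ with $T_{\chi}c=f$ and $\|c\|\le\frac{1}{\sqrt{A_{\varphi}}\,(1-\theta)}\|f\|$ for some $\theta<1$ independent of $J$; then the duality estimate $\|f\|^{2}=\langle c,T_{\chi}^{*}f\rangle\le\|c\|\,(\sum_{i}|\langle f,\chi_{i}\rangle|^{2})^{1/2}$ forces $\sum_{i}|\langle f,\chi_{i}\rangle|^{2}\ge A_{\varphi}(1-\theta)^{2}\|f\|^{2}$, a lower bound valid for all $J$.

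The engine is a one-step comparison with $\varphi$. Given $f$, set $c=T_{\varphi}^{*}S_{\varphi}^{-1}f$; then $T_{\varphi}c=f$ and $\|c\|^{2}=\langle f,S_{\varphi}^{-1}f\rangle\le A_{\varphi}^{-1}\|f\|^{2}$. Since $T_{\varphi}c=\sum_{i}c_{i}\varphi_{i}$ while $T_{\chi}c$ replaces the indices in $J^{c}$ by $\psi_{i}$, we get $f-T_{\chi}c=\sum_{i\in J^{c}}c_{i}(\varphi_{i}-\psi_{i})$. Applying the hypothesis to the scalar sequence that equals $c_{i}$ on $J^{c}$ and $0$ on $J$, and bounding $\|\sum_{i\in J^{c}}c_{i}\varphi_{i}\|\le\sqrt{B_{\varphi}}\,\|c\|$, $\|\sum_{i\in J^{c}}c_{i}\psi_{i}\|\le\sqrt{B_{\psi}}\,\|c\|$, $(\sum_{i\in J^{c}}|c_{i}|^{2})^{1/2}\le\|c\|$, we obtain
\[
\|f-T_{\chi}c\|\le\bigl(\lambda_{1}\sqrt{B_{\varphi}}+\lambda_{2}\sqrt{B_{\psi}}+\mu\bigr)\|c\|\le\theta\,\|f\|,\qquad \theta:=\frac{\lambda_{1}\sqrt{B_{\varphi}}+\lambda_{2}\sqrt{B_{\psi}}+\mu}{\sqrt{A_{\varphi}}},
\]
and $\theta<1$ is exactly the standing hypothesis.

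Next I iterate this, uniformly in $J$: put $f_{0}=f$ and, given $f_{n}$, take $c^{(n)}=T_{\varphi}^{*}S_{\varphi}^{-1}f_{n}$ and $f_{n+1}=f_{n}-T_{\chi}c^{(n)}$, so that $\|f_{n}\|\le\theta^{n}\|f\|$ and $\|c^{(n)}\|\le\theta^{n}A_{\varphi}^{-1/2}\|f\|$. Then $c:=\sum_{n\ge0}c^{(n)}$ converges in $l^{2}$ with $\|c\|\le\frac{1}{\sqrt{A_{\varphi}}\,(1-\theta)}\|f\|$, and, $T_{\chi}$ being bounded, the telescoping identity gives $T_{\chi}c=\sum_{n\ge0}(f_{n}-f_{n+1})=f$. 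Feeding this into the duality estimate of the first paragraph yields the partition-free lower frame bound $A_{\varphi}(1-\theta)^{2}$ for every weaving, so $\varphi$ and $\psi$ are woven; alternatively one can note the weavings are all frames and invoke the equivalence in Theorem \ref{2.0}.

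The step that needs care is the iteration: one has to fix the selection rule for the $c^{(n)}$ so that the geometric decay of $\|f_{n}\|$ is guaranteed, check the $l^{2}$-convergence of $\sum_{n}c^{(n)}$ and the interchange $T_{\chi}\sum_{n}c^{(n)}=\sum_{n}T_{\chi}c^{(n)}$, and---this is the real point---verify that $\theta$, $A_{\varphi}$, $B_{\varphi}$, $B_{\psi}$ are all independent of $J$, which is what turns the bound into a genuine universal weaving constant rather than a partition-dependent one. The remaining manipulations are the routine Bessel/synthesis-operator bookkeeping already collected in Section 2.
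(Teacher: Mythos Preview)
Your argument is correct and yields the very same universal lower bound $(\sqrt{A_{\varphi}}-\lambda_{1}\sqrt{B_{\varphi}}-\lambda_{2}\sqrt{B_{\psi}}-\mu)^{2}$ as the paper, but the route is genuinely different. The paper works directly on the analysis side: for $f\in\mathcal{H}$ it writes $\langle f,\psi_{i}\rangle=\langle f,\varphi_{i}\rangle-\langle f,\varphi_{i}-\psi_{i}\rangle$ on $J^{c}$, applies the reverse triangle inequality in $\ell^{2}$, and then bounds $\bigl(\sum_{i\in J^{c}}|\langle f,\varphi_{i}-\psi_{i}\rangle|^{2}\bigr)^{1/2}\le\|T_{\varphi}-T_{\psi}\|\,\|f\|$; the perturbation hypothesis, taken as a supremum over unit $\{c_{i}\}$, gives $\|T_{\varphi}-T_{\psi}\|\le\lambda_{1}\sqrt{B_{\varphi}}+\lambda_{2}\sqrt{B_{\psi}}+\mu$, and the proof is over in a few lines. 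Your proof instead works on the synthesis side: you build, via a Neumann-type iteration seeded by the canonical $\varphi$-coefficients, an explicit right inverse of $T_{\chi}$ with norm at most $(\sqrt{A_{\varphi}}(1-\theta))^{-1}$, and then dualize to the frame inequality. The paper's approach is considerably shorter and needs only one operator-norm estimate; your approach is more constructive, since it actually produces weaving-adapted reconstruction coefficients with a uniformly controlled $\ell^{2}$ norm, which can be of independent interest.
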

\begin{proof}
Consider $T_{\varphi_{J^{c}}}$ and $T_{\psi_{J^{c}}}$ as the synthesis operators of   Bessel sequences $\{\varphi_{i}\}_{i\in J^{c}}$ and $\{\psi_{i}\}_{i\in J^{c}}$, respectively. Then for every $J\subseteq I$ and $f\in \mathcal{H}$, we have
\begin{eqnarray*}
&&\left(\sum_{i\in J}|\langle f,\varphi_{i}\rangle|^{2}+\sum_{i\in J^{c}}|\langle f,\psi_{i}\rangle|^{2} \right)^{1/2}\\
&=& \left(\sum_{i\in J}|\langle f,\varphi_{i}\rangle|^{2}+\sum_{i\in J^{c}}|\langle f,\varphi_{i}\rangle-\langle f,\varphi_{i}-\psi_{i}\rangle|^{2}\right)^{1/2}\\
&\geq&   \left(\sum_{i\in I}|\langle f,\varphi_{i}\rangle|^{2}\right)^{1/2}-\left(\sum_{i\in J^{c}}|\langle f,\varphi_{i}-\psi_{i}\rangle|^{2}\right)^{1/2}\\
&\geq& \sqrt{A_{\varphi}}\left\Vert f\right\Vert-\Vert T_{\varphi_{J^{c}}} f-T_{\psi_{J^{c}}}f\Vert \\
&\geq& \left(\sqrt{A_{\varphi}}-\Vert T_{\varphi}-T_{\psi}\Vert\right) \Vert f\Vert\\
&\geq& \left(\sqrt{A_{\varphi}}-\lambda_{1}\Vert T_{\varphi}\Vert -\lambda_{2}\Vert T_{\psi}\Vert-\mu \right) \Vert f\Vert\\
&\geq& \left(\sqrt{A_{\varphi}}-\lambda_{1}\sqrt{B_{\varphi}} -\lambda_{2}\sqrt{B_{\psi}}-\mu \right) \Vert f\Vert.
\end{eqnarray*}
Using the assumption $\sqrt{A_{\varphi}}-\lambda_{1}\sqrt{B_{\varphi}} -\lambda_{2}\sqrt{B_{\psi}}-\mu> 0$, the lower bound is obtained.
Also, clearly $\{\varphi_{i}\}_{i\in J}\cup \{\psi_{i}\}_{i\in J^{c}}$ is Bessel with an upper bound $B_{\varphi}+B_{\psi}$. Thus the result follows.
\end{proof}
\begin{rem}
It is worth to note that, Theorem 6.1 of  \cite{Bemros} is obtained from the above theorem in case $\lambda_{1}=\lambda_{2}=0$, with the  perturbation bound
\begin{eqnarray*}
\mu \leq \dfrac{A_{\varphi}}{2(\sqrt{B_{\varphi}}+\sqrt{B_{\psi}})} <  \dfrac{\sqrt{A_{\varphi}}}{2}.
\end{eqnarray*}
Hence, in  Theorem \ref{perturbation1}  we have more freedom for perturbation bound.
\end{rem}
Applying Theorem \ref{perturbation1}, we will obtain the following results.
\begin{cor}
Suppose $\varphi=\{\varphi_{i}\}_{i\in I}$ is a frame  for $\mathcal{H}$ and $0 \neq h\in \mathcal{H}$. Also, let $\lbrace \lambda_{i}\rbrace_{i\in I}$ be a sequence of scalars so that
\begin{eqnarray*}
\sum_{i\in I}\vert \lambda_{i}\vert^{2} <\alpha\dfrac{A_{\varphi}}{\Vert h\Vert^{2}},
\end{eqnarray*}
for some $\alpha < 1$.
 Then $\{\varphi_{i}\}_{i\in I}$ and $\{\varphi_{i}+\lambda_{i}h\}_{i\in I}$ are woven.
\end{cor}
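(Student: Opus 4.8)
The plan is to obtain this as a direct application of Theorem~\ref{perturbation1}. Set $\psi_{i}:=\varphi_{i}+\lambda_{i}h$ for $i\in I$. First I would note that $\{\lambda_{i}h\}_{i\in I}$ is a Bessel sequence, since for every $f\in\mathcal{H}$,
\[
\sum_{i\in I}|\langle f,\lambda_{i}h\rangle|^{2}=|\langle f,h\rangle|^{2}\sum_{i\in I}|\lambda_{i}|^{2}\leq\|h\|^{2}\Big(\sum_{i\in I}|\lambda_{i}|^{2}\Big)\|f\|^{2},
\]
and therefore $\psi=\{\psi_{i}\}_{i\in I}$ is again a frame for $\mathcal{H}$ (for instance by the standard perturbation estimate, or directly from the triangle inequality in $\ell^{2}$ together with the lower frame bound of $\varphi$); this is the only hypothesis of Theorem~\ref{perturbation1} that is not yet in hand.

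Next, for an arbitrary finitely supported scalar sequence $\{c_{i}\}_{i\in I}$ one has $\varphi_{i}-\psi_{i}=-\lambda_{i}h$, so $\sum_{i\in I}c_{i}(\varphi_{i}-\psi_{i})=-\big(\sum_{i\in I}c_{i}\lambda_{i}\big)h$, and the Cauchy--Schwarz inequality gives
\[
\Big\|\sum_{i\in I}c_{i}(\varphi_{i}-\psi_{i})\Big\|=\Big|\sum_{i\in I}c_{i}\lambda_{i}\Big|\,\|h\|\leq\|h\|\Big(\sum_{i\in I}|\lambda_{i}|^{2}\Big)^{1/2}\Big(\sum_{i\in I}|c_{i}|^{2}\Big)^{1/2}.
\]
This is precisely the perturbation inequality of Theorem~\ref{perturbation1} with (in the notation of that theorem) the first two constants equal to $0$ and $\mu:=\|h\|\big(\sum_{i\in I}|\lambda_{i}|^{2}\big)^{1/2}$.

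It then remains only to check the numerical condition of Theorem~\ref{perturbation1}, which in this case reduces to $\mu<\sqrt{A_{\varphi}}$, i.e. $\|h\|^{2}\sum_{i\in I}|\lambda_{i}|^{2}<A_{\varphi}$. This follows at once from the assumption $\sum_{i\in I}|\lambda_{i}|^{2}<\alpha A_{\varphi}/\|h\|^{2}$ and $\alpha<1$. Theorem~\ref{perturbation1} then yields that $\varphi$ and $\{\varphi_{i}+\lambda_{i}h\}_{i\in I}$ are woven. I do not anticipate a genuine obstacle: the only steps needing a line of justification are the Cauchy--Schwarz bound that absorbs the entire difference sequence into the $\mu$-term and the verification that $\psi$ is a frame; one should also tacitly assume $0<\alpha<1$, since $\alpha\le 0$ would force every $\lambda_{i}=0$ and make the statement trivial.
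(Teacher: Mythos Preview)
Your proof is correct and follows essentially the same approach as the paper: both verify the perturbation inequality of Theorem~\ref{perturbation1} with $\lambda_{1}=\lambda_{2}=0$ and $\mu=\|h\|\big(\sum_{i\in I}|\lambda_{i}|^{2}\big)^{1/2}$ via Cauchy--Schwarz, then check $\mu<\sqrt{A_{\varphi}}$ using $\alpha<1$. Your version is slightly more explicit in verifying that $\psi$ is a frame (the paper only checks that it is Bessel, leaving the lower bound implicit in the perturbation estimate), and your computation of $\big\|\sum_{i}c_{i}\lambda_{i}h\big\|$ is marginally more direct, but there is no substantive difference.
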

\begin{proof}
Obviously,  $\{\varphi_{i}+\lambda_{i}h\}_{i\in I}$ is a Bessel sequence with the upper bound $\left(\sqrt{ B_{\varphi}}+\Vert \{\lambda_{i}\}_{i\in I}\Vert\Vert h\Vert\right)^{2}$. Moreover, for any sequence $\lbrace c_{i}\rbrace_{i\in I}$ of scalars
\begin{eqnarray*}
\left\Vert \sum_{i\in I}c_{i}(\varphi_{i}+\lambda_{i}h-\varphi_{i})\right\Vert &=&\left\Vert \sum_{i\in I}c_{i}\lambda_{i}h\right\Vert \\
&\leq& \sum_{i\in I}\vert c_{i}\vert \vert \lambda_{i}\vert \Vert h\Vert\\
&\leq&\left(\sum_{i\in I}\vert c_{i}\vert^{2}\right)^{1/2}\left(\sum_{i\in I}\vert \lambda_{i}\vert^{2}\right)^{1/2}\Vert h\Vert\\
&<&\sqrt{\alpha A_{\varphi}}\left(\sum_{i\in I}\vert c_{i}\vert^{2}\right)^{1/2}.
\end{eqnarray*}
So, the result follows by Theorem \ref{perturbation1}.
\end{proof}
\begin{cor}
Suppose $m, n\in \mathbb{N}$ are relatively prime and $\varphi=\{\varphi_{i}\}_{i=1}^{n}$ is an  $\epsilon$-nearly equal norm and $\epsilon$-nearly Parseval frame for $\mathcal{H}_{m}$  for $\epsilon>0$ small enough. Then there exists an equal-norm Parseval frame $\psi=\{\psi_{i}\}_{i=1}^{n}$ so that $\varphi$ and $\psi$ are woven.
\end{cor}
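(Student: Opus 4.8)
The plan is to reduce the statement to the perturbation criterion of Theorem~\ref{perturbation1}, using as the only nontrivial ingredient the approximation theorem of Bodmann and Casazza \cite{BC}. Since $m$ and $n$ are relatively prime, that theorem provides, for each sufficiently small $\epsilon>0$, an equal-norm Parseval frame $\psi=\{\psi_i\}_{i=1}^{n}$ for $\mathcal{H}_m$ together with a bound of the form
\[
\sum_{i=1}^{n}\|\varphi_i-\psi_i\|^2\le \delta(\epsilon),
\]
where $\delta(\epsilon)\to 0$ as $\epsilon\to 0$. (Informally, one flows $\varphi$ within the space of frames toward an equal-norm Parseval frame, and the relative primality of $m$ and $n$ is exactly what guarantees that this flow terminates without obstruction and travels a distance controlled by $\epsilon$.) I would fix such a $\psi$.

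Next I would check the hypotheses of Theorem~\ref{perturbation1} with $\lambda_1=\lambda_2=0$ and $\mu=\big(\sum_{i=1}^{n}\|\varphi_i-\psi_i\|^2\big)^{1/2}$. For any scalars $c_1,\dots,c_n$, the Cauchy--Schwarz inequality gives
\[
\left\|\sum_{i=1}^{n}c_i(\varphi_i-\psi_i)\right\|\le \sum_{i=1}^{n}|c_i|\,\|\varphi_i-\psi_i\|\le \mu\left(\sum_{i=1}^{n}|c_i|^2\right)^{1/2},
\]
so the required perturbation inequality holds. Because $\varphi$ is $\epsilon$-nearly Parseval we have $A_\varphi\ge 1-\epsilon$, so it remains only to ensure $\mu\le \sqrt{\delta(\epsilon)}<\sqrt{1-\epsilon}\le\sqrt{A_\varphi}$, which holds as soon as $\epsilon$ is small enough that $\delta(\epsilon)<1-\epsilon$. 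Theorem~\ref{perturbation1} then yields that $\varphi$ and $\psi$ are woven, while $\psi$ is equal-norm and Parseval by construction, completing the argument.

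The hard part is not in this paper's toolbox at all: it is the quantitative statement that an $\epsilon$-nearly equal-norm, $\epsilon$-nearly Parseval frame of $n$ vectors lies within a distance $\sqrt{\delta(\epsilon)}$, with $\delta(\epsilon)\to 0$, of a genuine equal-norm Parseval frame of $n$ vectors. This is where both hypotheses of the corollary---relative primality of $m$ and $n$, and ``$\epsilon$ small enough''---are genuinely used, and it is precisely the content of \cite{BC}. Once that input is invoked, the weaving conclusion is immediate from Theorem~\ref{perturbation1}.
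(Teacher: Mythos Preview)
Your proposal is correct and follows essentially the same approach as the paper: invoke the Bodmann--Casazza approximation result \cite{BC} to obtain the equal-norm Parseval frame $\psi$ close to $\varphi$, use Cauchy--Schwarz to convert the $\ell^2$-closeness $\sum_i\|\varphi_i-\psi_i\|^2$ into the perturbation estimate, and then apply Theorem~\ref{perturbation1} with $\lambda_1=\lambda_2=0$. The paper's proof differs only in writing out the explicit bound $\big(\sum_i\|\varphi_i-\psi_i\|^2\big)^{1/2}\le \tfrac{\sqrt{m}}{2}\epsilon+\tfrac{27}{8}m^2n(n-1)^8\epsilon$ from \cite{BC} and choosing a specific $\epsilon$-threshold (involving an auxiliary $\alpha<1$) in place of your abstract $\delta(\epsilon)\to 0$; the logical skeleton is identical.
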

\begin{proof}
Considering
\begin{eqnarray*}
0 < \epsilon < \min \left\{\dfrac{1}{2}, \dfrac{8\alpha\sqrt{A_{\varphi}}}{4\sqrt{m}+27m^{2}n(n-1)^{8}}\right\},
\end{eqnarray*}
for some $\alpha<1$. There exists an equal-norm Parseval frame $\psi=\{\psi_{i}\}_{i=1}^{n}$ so that
\begin{eqnarray*}
\left(\sum_{i=1}^{n}\Vert \varphi_{i}-\psi_{i}\Vert^{2}\right)^{1/2} \leq \dfrac{\sqrt{m}}{2}\epsilon+\dfrac{27}{8}m^{2}n(n-1)^{8}\epsilon.
\end{eqnarray*}
See \cite{BC}.
Thus, for all $\{c_{i}\}_{i=1}^{n}$ of scalars we have
\begin{eqnarray*}
\left\Vert \sum_{i=1}^{n}c_{i}(\varphi_{i}-\psi_{i})\right\Vert &\leq& \left(\sum_{i=1}^{n}\vert c_{i}\vert^{2}\right)^{1/2}
\left(\sum_{i=1}^{n}\Vert \varphi_{i}-\psi_{i}\Vert^{2}\right)^{1/2}\\
&\leq& \left(\dfrac{\sqrt{m}}{2}\epsilon+\dfrac{27}{8}m^{2}n(n-1)^{8}\epsilon\right) \left(\sum_{i=1}^{n}\vert c_{i}\vert^{2}\right)^{1/2}\\
&<&\alpha \sqrt{ A_{\varphi}}\left(\sum_{i=1}^{n}\vert c_{i}\vert^{2}\right)^{1/2}.
\end{eqnarray*}
Using Theorem \ref{perturbation1} we obtain the desired result.
\end{proof}

\bibliographystyle{amsplain}

\end{document}